\title{Dilogarithm identities of higher degree\\ and cluster $a$-variable periodicity}
\author{Zachary Nash}
\address[Z. Nash]{\href{mailto:zach.n.nash@gmail.com}{\textup{\texttt{zach.n.nash@gmail.com}}}, Math Academy, Pasadena High School, Pasadena, CA 91107 USA}
\author{Dylan Rupel}
\address[D. Rupel]{\href{mailto:rupel.dylan@pusd.us}{\textup{\texttt{rupel.dylan@pusd.us}}}, Math Academy, Pasadena Unified School District, Pasadena, CA 91109 USA}
\newcommand{\QQ}{\mathbb{Q}}
\newcommand{\ZZ}{\mathbb{Z}}
\newcommand{\RR}{\mathbb{R}}
\newcommand{\PP}{\mathbb{P}}
\newcommand{\TT}{\mathbb{T}}
\renewcommand{\aa}{\boldsymbol{a}}
\newcommand{\cc}{\boldsymbol{c}}
\newcommand{\dd}{\boldsymbol{d}}
\newcommand{\qq}{\boldsymbol{q}}
\newcommand{\yy}{\boldsymbol{y}}
\newcommand{\zz}{\boldsymbol{z}}
\newcommand{\LL}{\tilde{L}}
\newcommand{\Li}{\operatorname{Li}}
\DeclareMathOperator{\Trop}{Trop}
\DeclareMathOperator{\diag}{diag}
\newtheorem{theorem}{Theorem}[section]
\newtheorem{lemma}[theorem]{Lemma}
\newtheorem{cor}[theorem]{Corollary}
\newtheorem{prop}[theorem]{Proposition}
\newtheorem{definition}[theorem]{Definition}
\newtheorem{example}[theorem]{Example}
\newtheorem{remark}[theorem]{Remark}
\numberwithin{equation}{section}
\begin{document}

\begin{abstract}
  We use the periodicities of cluster groupoid mutations established by Li and the second author to prove that the dilogarithm identities of higher degree obtained by Nakanishi follow from the classical dilogarithm identities associated to a periodicity of cluster mutations when the exchange matrix is full rank.
\end{abstract}

\maketitle

%%%%%%%%%%%%%%%%%%%%%%
\section{Introduction}
  
  The Rogers and Euler dilogarithms are important functions in mathematics, satisfying a variety of equations known as \emph{dilogarithm identities} \cite{zagier2007dilogarithm}.
  These functions are deeply related to cluster algebra mutations \cite{nakanishi2011periodicities}.
  Cluster algebras are commutative rings which are obtained by setting some initial data, known as the \emph{seed}, and then repeatedly \emph{mutating} that data to obtain new seeds \cite{fomin2002cluster}.
  The seeds obtained through mutations are organized in the \emph{exchange graph} and the cycles in this graph are called \emph{periodicities of seed mutations}.
  %The data stored in the vertices of this graph is then used to generate a ring, which is the cluster algebra.
  Any periodicity of seed mutations gives rise to a dilogarithm identity \cite{kashaev2011classical,nakanishi2011periodicities}.

  Cluster algebra mutations were generalized in \cite{chekhov2014teichmuller}.
  This generalization is obtained through the insertion of additional data, known as the \emph{z-variables}, which modify the standard binomial mutation rule into a polynomial one.
  Motivated by the connection between cluster algebras and dilogarithm identities, Nakanishi defined the \emph{dilogarithms of higher degree} in order to establish an analogous connection between periodicities of generalized cluster mutations and dilogarithm identities \cite{nakanishi2018rogers}.
  By factoring the associated polynomial, this function may be split into a sum of (the analytic continuations of) ordinary dilogarithms, reducing dilogarithm identities of higher degree to just the ordinary case.
  However, to do this properly, one must be incredibly careful with choosing the path of analytic continuation; our approach avoids this complication by keeping the generalized dilogarithm as one package.

  In \cite{li2020symplectic}, Li and Rupel introduced symplectic groupoids integrating a compatible Poisson structure on a cluster variety.
  Here all choices of the $z$-variables are considered together so that they become the coordinates on an affine space.
  In order to obtain a symplectic groupoid, this necessitates the introduction of corresponding fiber coordinates, designated as $a$-variables in \cite{li2020symplectic}.
  Mutations for these groupoids were obtained through the Hamiltonian perspective, first introduced in \cite{fock702397quantum}.
  In this way, the mutations of the \(a\)-variables were described by a Hamiltonian flow associated with an Euler dilogarithm of higher degree.
  In \cite{li2020symplectic}, it was established that the periodicity of \(a\)-variables follows directly from a periodicity of the generalized cluster mutations.
  This periodicity states that a certain sum of integral expressions is zero.
 
  The \(z\)-variables appear in the associated dilogarithm identities, but prior research on the dilogarithms of higher degree left those variables fixed \cite{nakanishi2018rogers}.
  The major goal of this paper is to investigate and quantify how changing the initial \(z\)-variables affects the dilogarithm expressions appearing in the identities.
  We show that a modified form of the $a$-variable periodicity formulas controls the dependence of the dilogarithm identities on the $z$-variables and that this modified sum of integral expressions being zero follows from the $a$-variable periodicity when the exchange matrix is full rank.
  This shows that the dilogarithm identity of higher degree is independent of the $z$-variables in this case.
  By setting the $z$-variables to zero, we arrive at a classical dilogarithm identity associated to a periodicity of ordinary cluster mutations which thus implies the dilogarithm identities of higher degree.

%%%%%%%%%%%%%%%%%%%%%%%%%%%%
\section{Generalized cluster algebras and their groupoids} 
  \label{sec:generalized cluster algebras}
  
  Ordinary cluster algebras were introduced in 2002 by Fomin and Zelevinsky \cite{fomin2002cluster}. 
  Chekhov and Shapiro  \cite{chekhov2014teichmuller} later generalized this concept by introducing certain \(p\)-coefficients, yielding generalized cluster algebras.  
  In \cite{iwaki2016exact}, \cite{nakanishi2015structure}, and \cite{nakanishi2015companion} it was shown that \(p\)-coefficients satisfying a normalization condition and a power condition can be repackaged into \(z\)-variables, which greatly helps in simplifying the mutation rule.
  These mutation rules were extended in \cite{li2020symplectic} to produce a groupoid structure.
  
  For the purposes of this paper, it will mainly be useful to consider not the generalized cluster algebras as a whole, but rather their underlying structure: their seeds and mutations.

  \subsection{Generalized Cluster Seeds}
    \begin{definition}
      A semifield is a set \(\PP\) with an additive binary operation \(\oplus\) and a multiplicative binary operation \(\cdot\), such that:
      \begin{enumerate}[(1)]
        \item \((\PP,\cdot)\) is an abelian group free of torsion;
        \item the additive operation \(\oplus\) is associative and commutative;
        \item the multiplicative operation distributes over addition, i.e. 
        \[x(y\oplus z) = xy\oplus xz \quad \text{for all \(x,y,z \in \PP\).}\]
      \end{enumerate}
      Let $\PP_0:=\PP\cup\{0\}$, where $0$ is an identity element for $\oplus$.
    \end{definition}
  
    \begin{example}
      The set $\RR_{>0}$ is naturally a semifield under ordinary addition and multiplication of real numbers.
    \end{example}
  
    \begin{example}
      \label{ex:universal semifield}
      Let \(\yy = (y_i)_{i=1}^n\) be an \(n\)-tuple of formal commuting variables.  
      Then the \emph{universal semifield} \(\QQ_+(\yy)\) is defined to be the set of all subtraction-free rational functions \(P(\yy)/Q(\yy)\), i.e. where \(P(\yy)\) and \(Q(\yy)\) are non-zero polynomials in \(\yy\) with non-negative integer coefficients.
      Its ``universality" comes from the fact that, given any semifield \(\PP\), and any elements \(t_1,\cdots,t_n \in \PP\), there exists a unique semifield homomorphism \(\varphi: \QQ_+(\yy) \to \PP\) such that \(\varphi(y_i) = t_i\) for all \(1 \leq i \leq n\) \cite[Lemma 2.1.6]{berenstein1996parametrizations}.
      In other words, every equality that holds in \(\QQ_+(\yy)\) still holds when you substitute in elements of another semifield.
    \end{example}
  
    \begin{example}
      Let \(\yy = (y_i)_{i=1}^n\) be an \(n\)-tuple of formal commuting variables.  
      Then we define the \emph{tropical semifield} of \(\yy\) as
      \[\Trop(\yy) \coloneq \left\{\prod\limits_{i=1}^n y_i^{a_i} | \ a_i \in \ZZ\right\},\]
      where multiplication is defined as usual, and addition is defined by
      \[\prod\limits_{i=1}^n y_i^{a_i} \oplus \prod\limits_{i=1}^n y_i^{b_i} \coloneq \prod\limits_{i=1}^n y_i^{\min(a_i,b_i)}.\]
    \end{example}

    Following \cite{nakanishi2018rogers}, we construct the seed of a generalized cluster algebra as follows.  
    Let \(n\) be a strictly positive integer, called the \emph{rank}, and let \(\PP\) be a semifield, called the \emph{coefficient semifield}.  
    We also fix an \(n\)-tuple of positive integers \(\dd = (d_i)_{i=1}^{n}\) called the \emph{mutation degrees}.
  
    \begin{definition}
      A \emph{$y$-seed}, or simply a \emph{seed}, in a generalized cluster algebra over $\PP$ is defined to be a triple \((B,\yy,\zz)\), where:
      \begin{itemize}
        \item \(B = (b_{ij})_{i,j=1}^n\) is an \(n \times n\) skew-symmetrizable integer matrix, called the \emph{exchange matrix}, with a fixed choice of diagonal matrix \(R\) with positive integer entries, called the \emph{skew-symmetrizer}, such that \(RB\) is skew-symmetric, i.e. \((RB)^T = -RB\);
        \item \(\yy = (y_i)_{i=1}^n\) is an \(n\)-tuple of elements of \(\PP\), called the \emph{$y$-variables};
        \item \(\zz = \{z_{i,s} | \ 1 \leq i \leq n; 1 \leq s \leq d_i-1\}\) is a collection of elements from \(\PP_0\), called the \emph{$z$-variables}.
      \end{itemize}
      When $d_i=1$ for all $i$, there are no $z$-variables and we write the seed simply as $(B,\yy)$.
    \end{definition}
  
    For \(1 \leq k \leq n\) and \(\alpha \in \PP\), we define 
    \[\quad P_k(\alpha) \coloneq 1 \oplus \bigoplus\limits_{s=1}^{d_k-1} z_{k,s} \alpha^s \oplus \alpha^{d_k}.\]
    For $a\in\RR$, set \([a]_+ \coloneq \max(a,0)\).
  
    \begin{definition}
      \label{def:seed mutations}
      For any integer \(1 \leq k \leq n\) and choice of sign \(\varepsilon \in \{1,-1\}\), the \emph{mutation} of \((B,\yy,\zz)\) in direction \(k\) is a new seed \((B',\yy',\zz')\) defined as follows:
      \begin{align*}
        b_{ij}' &\coloneq \begin{cases} -b_{ij} & \text{if $i=k$ or $j=k$;}\\ b_{ij}+[-\varepsilon b_{ik} d_k]_{+} b_{kj}+b_{ik}[\varepsilon d_k b_{kj}]_+ & \text{if $i,j \neq k$;}\end{cases}\\
        y_i' &\coloneq \begin{cases} y_k^{-1} & \text{if $i=k$;}\\ y_i y_k^{[\varepsilon d_k b_{ki}]_+} \big(P^\circ_k(y_k^\varepsilon)\big)^{-b_{ki}}  \hspace{0.6in} & \text{if $i \neq k$;} \end{cases}\\
        z_{i,s}' &\coloneq \begin{cases} z_{k,d_k-s} \hspace{1.6in} & \text{if $i=k$;}\\ z_{i,s} & \text{if $i \neq k$;}\end{cases}
      \end{align*}
      where $P^\circ_k(\alpha) \coloneq P_k(\alpha^\varepsilon) \alpha^{(\frac{1-\varepsilon}{2})d_k}$.
    \end{definition}
    An easy check shows that $B'$ is still skew-symmetrizable with the same skew-symmetrizer $R$ and that the formulas do not actually depend on the choice of sign \(\varepsilon\).

    Let $\TT_n$ denote the $n$-regular tree with root vertex $t_0$ and edges adjacent to each vertex labeled by $1,\ldots,n$.
    Fix an initial seed $(B_{t_0},\yy_{t_0},\zz_{t_0})$ and assign seeds $(B_t,\yy_t,\zz_t)$ for $t\in\TT_n$ so that $(B_t,\yy_t,\zz_t)$ and $(B_{t'},\yy_{t'},\zz_{t'})$ are related by mutation in direction $k$ when $t$ and $t'$ are joined by an edge labeled by $k$ in $\TT_n$.
    This produces the \emph{exchange pattern} associated to the seed $(B_{t_0},\yy_{t_0},\zz_{t_0})$.
  
    As a special case, we consider when \(\PP\) is the universal semifield \(\PP = \QQ_+(\yy,\zz)\) and take an initial seed where \(\yy_{t_0} = \yy\) and \(\zz_{t_0} = \zz\) are formal commuting variables.
    This allows to define the \emph{tropical sign} \(\varepsilon_{j;t}\) for $t\in\TT_n$ and $1\le j\le n$ as follows.
    Let \(\varphi_{\Trop}: \QQ_+(\yy,\zz) \to \Trop(\yy,\zz)\) be the unique semifield homomorphism such that \(\varphi_{\Trop}(y_i) = y_i\) and \(\varphi_{\Trop}(z_{i,s}) = z_{i,s}\) for \(1 \leq i \leq n\) and \(1 \leq s \leq d_i-1\).
    Then it is known \cite[Lemma 3.6]{nakanishi2015structure} that 
    \[
      \varphi_{\Trop}(y_{j;t}) 
      =
      \prod\limits_{i=1}^n y_i^{c_{ij;t}}, \ (c_{ij;t})_{i=1}^n \in \ZZ^n
    \]
    for all \(1 \leq j \leq n\) and $t\in\TT_n$.
    In other words, there is no dependency on the \(z\)-variables.
    This gives us integer matrices \(C_t = (c_{ij;t})_{i=1,j=1}^{n}\) associated with $t\in\TT_n$, called the \emph{C-matrices}.
    Alternatively, these \(C\)-matrices can be described by the following recursion \cite[Proposition 3.8]{nakanishi2015structure}:
    \[
      \label{eq:C-matrix 1}
      c_{ij;t_0} = \delta_{ij}
    \]
    and, when $t,t'\in\TT_n$ are joined by an edge labeled $k$, the $C$-matrices $C_t$ and $C_{t'}$ are related by
    \[
      c_{ij;t'} = \begin{cases} -c_{ik;t} & \text{if $j=k$;} \\ c_{ij;t}+[-c_{ik;t} d_k]_+ b_{kj;t}+c_{ik;t}[d_k b_{kj;t}]_+ & \text{if $j \neq k$.}\end{cases}
    \]
    Furthermore, we have the following key theorem known as ``sign-coherence of $c$-vectors''.
    \begin{theorem}  
      \label{th:sign coherence}
      \cite[Corollary 5.5]{gross2018canonical}
      \cite[Theorem 3.20]{nakanishi2015structure}
      Each c-vector \(\cc_{j;t} = (c_{ij;t})_{i=1}^n\) is non-zero and its components are either all non-negative or all non-positive.
    \end{theorem}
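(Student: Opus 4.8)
The plan is to fix the initial exchange matrix $B_{t_0}$ together with its skew-symmetrizer $R$ and to recast the combinatorial $C$-matrix recursion in a geometric setting in which the sign condition appears as an instance of positivity; recall that, as noted above, the matrices $C_t$ depend only on $B_{t_0}$ and $\dd$. The first, elementary, point is that the substantive part of the statement is the assertion that the components of each $\cc_{j;t}$ are either all non-negative or all non-positive: granting this, a short induction on the distance of $t$ from $t_0$ in $\TT_n$ gives $\det C_t = \pm 1$. Indeed, when $t$ and $t'$ are joined by an edge labeled $k$, the term $[-c_{ik;t} d_k]_+ b_{kj;t}+c_{ik;t}[d_k b_{kj;t}]_+$ added to $c_{ij;t}$ equals $c_{ik;t}\,[d_k b_{kj;t}]_+$ if every $c_{ik;t}\ge 0$ and equals $c_{ik;t}\,[-d_k b_{kj;t}]_+$ if every $c_{ik;t}\le 0$, so in either case it is a fixed integer multiple, independent of $i$, of the $i$-th entry of column $k$ of $C_t$; thus passing from $C_t$ to $C_{t'}$ amounts to adding integer multiples of column $k$ to the other columns and then negating column $k$. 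In particular every $\cc_{j;t}$ is then nonzero, and the whole statement reduces to the sign statement, which genuinely requires structural input beyond the recursion.

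First I would settle the ordinary skew-symmetrizable case $\dd = (1,\dots,1)$ using the cluster scattering diagram $\mathfrak{D}_{B_{t_0}}$ in $\RR^n$. Here the $c$-vectors $\cc_{j;t}$, as $t$ ranges over the vertices of $\TT_n$ reachable from $t_0$ and $j$ over $1,\dots,n$, are exactly the primitive normal vectors of the walls of $\mathfrak{D}_{B_{t_0}}$, up to sign. Since $\mathfrak{D}_{B_{t_0}}$ is the consistent completion of its initial walls --- orthogonal to $\pm e_1,\dots,\pm e_n$ --- by walls carrying wall functions with positive coefficients, every wall is orthogonal to a vector whose coordinates are all of one sign, that sign being read off from the attached monomial \cite{gross2018canonical}; translating back yields sign-coherence of the $\cc_{j;t}$. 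In the skew-symmetric case there is the alternative of running the Derksen--Weyman--Zelevinsky mutation of the quiver with potential attached to $B_{t_0}$, for which $\cc_{j;t}$ is $\pm$ the dimension vector of an indecomposable rigid representation, hence non-negative and nonzero.

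For arbitrary mutation degrees $\dd$ one repeats the argument with the corresponding $\dd$-deformed scattering diagram, whose wall functions are built from the polynomials $P_k$, or else reduces it to the ordinary skew-symmetrizable statement; this is precisely the content of \cite[Theorem 3.20]{nakanishi2015structure}, with the case $\dd=(1,\dots,1)$ being \cite[Corollary 5.5]{gross2018canonical}. The main obstacle throughout is this positivity input --- positivity of the (generalized) cluster scattering diagram, equivalently positivity of the $F$-polynomials, equivalently non-negativity of the relevant Euler characteristics of quiver Grassmannians in the skew-symmetric case --- which is the genuinely deep ingredient; everything else is bookkeeping with the recursion. As it is already supplied by the cited references, we invoke it rather than reprove it.
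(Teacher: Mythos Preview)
The paper does not give a proof of this statement at all; it merely records the citations \cite[Corollary~5.5]{gross2018canonical} and \cite[Theorem~3.20]{nakanishi2015structure} and moves on. In that sense your proposal and the paper agree: both ultimately invoke the cited references for the deep positivity input, and you explicitly say so in your final paragraph.

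Where you go further is in sketching the content of those references. Your reduction of nonvanishing to sign-coherence via $\det C_t=\pm1$ is correct and standard. Your outline of the GHKK argument---that the walls of the cluster scattering diagram built from $B_{t_0}$ all have normals lying in the positive or negative cone, and that $c$-vectors arise as such normals for the cluster chambers---captures the mechanism correctly, with the minor caveat that $c$-vectors correspond to walls of the cluster chambers rather than to \emph{all} walls of the full scattering diagram. The DWZ alternative in the skew-symmetric case is also accurately described. For general $\dd$, Nakanishi's argument in \cite{nakanishi2015structure} is a reduction to the ordinary case (essentially via the companion construction, not via a $\dd$-deformed scattering diagram), so your second option there is the one that actually matches the cited proof. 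None of this is a gap: you correctly identify the hard step as the positivity of the scattering diagram and defer to the references for it, exactly as the paper does.
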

    Accordingly, we set the tropical sign \(\varepsilon_{j;t}\) as \(+1\) in the former case and \(-1\) in the latter case.
    Still working inside the universal semifield \(\QQ_+(\yy,\zz)\), we make the following definition.
    \begin{definition}
      For \(t \in \TT_n\) and \(1 \leq i \leq n\), we define an \emph{F-polynomial} \(F_{i;t}(\yy,\zz) \in \QQ_+(\yy,\zz)\), using the following recursion relations:
      \[
        F_{i;t_0}(\yy,\zz) = 1
      \]
      and, when \(t,t' \in \TT_n\) are joined by an edge labeled \(k\), we have
      \[
        F_{i,t'}(\yy,\zz) =
        \begin{cases}
          F_{k,t}(\yy,\zz)^{-1} \left(\prod\limits_{j=1}^n y_{j;t_0}^{[-c_{jk;t} d_k]_+} F_{j;t}(\yy,\zz)^{[-b_{jk;t} d_k]_+}\right) P_k\left(\prod\limits_{j=1}^n y_{j;t_0}^{c_{jk;t}} F_{j;t}(\yy,\zz)^{b_{jk;t}}\right) & \text{if $i=k$;} \\
          F_{i;t}(\yy,\zz) & \text{if $i \neq k$.}
        \end{cases}
      \]
    \end{definition}
    Following \cite[Proposition 3.12]{nakanishi2015structure}, as the name suggests, the $F$-polynomials are actually elements of $\ZZ[\yy,\zz]$.
    As a consequence of Theorem~\ref{th:sign coherence} and \cite[Proposition 3.19]{nakanishi2015structure} (c.f. \cite[Proposition 5.6]{fomin2007cluster}), we have the following.
    \begin{theorem}
      \label{th:constant term}
      Each $F$-polynomial $F_{i;t}(\yy,\zz)$ has constant term 1 when considered as a polynomial in $\yy$.
    \end{theorem}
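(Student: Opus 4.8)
The plan is to prove the theorem in the equivalent form $F_{i;t}(\mathbf 0,\zz)=1$ for all $1\le i\le n$ and $t\in\TT_n$ by induction on the distance $d(t_0,t)$ in $\TT_n$; since each $F_{i;t}$ lies in $\ZZ[\yy,\zz]$, its constant term as a polynomial in $\yy$ is exactly the value $F_{i;t}(\mathbf 0,\zz)$. The base case $t=t_0$ is immediate from $F_{i;t_0}=1$. For the inductive step let $t'$ be the neighbor of $t$ along the edge labeled $k$ with $d(t_0,t')=d(t_0,t)+1$ and assume $F_{j;t}(\mathbf 0,\zz)=1$ for $1\le j\le n$. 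When $i\ne k$ there is nothing to prove since $F_{i;t'}=F_{i;t}$, so the content lies entirely in the exchanged direction $i=k$.

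Set $\tilde z_{k,0}=\tilde z_{k,d_k}=1$ and $\tilde z_{k,s}=z_{k,s}$ for $1\le s\le d_k-1$, so that, $\oplus$ being ordinary addition in $\QQ_+(\yy,\zz)$, one has $P_k(\alpha)=\sum_{s=0}^{d_k}\tilde z_{k,s}\alpha^s$. Multiplying the defining recursion for $F_{k;t'}$ through by $F_{k;t}$ and expanding $P_k$ gives the identity
\[
  F_{k;t'}(\yy,\zz)\,F_{k;t}(\yy,\zz)=\sum_{s=0}^{d_k}\tilde z_{k,s}\prod_{j=1}^n y_{j;t_0}^{\,[-c_{jk;t}d_k]_+ + s\,c_{jk;t}}\,F_{j;t}(\yy,\zz)^{\,[-b_{jk;t}d_k]_+ + s\,b_{jk;t}}.
\]
A short check on the sign of $b_{jk;t}$ (respectively $c_{jk;t}$) shows that both exponents $[-b_{jk;t}d_k]_+ + s\,b_{jk;t}$ and $[-c_{jk;t}d_k]_+ + s\,c_{jk;t}$ are nonnegative for every $j$ and every $0\le s\le d_k$. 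Hence the right-hand side, call it $G$, lies in $\ZZ[\yy,\zz]$ — it is a $\ZZ[\zz]$-linear combination of monomials in the $y_{j;t_0}$ times nonnegative powers of the polynomials $F_{j;t}$ — and, both sides being polynomials, the displayed identity holds in $\ZZ[\yy,\zz]$, so we may apply the specialization $\yy\mapsto\mathbf 0$.

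The final step computes $G(\mathbf 0,\zz)$, and here sign-coherence (Theorem~\ref{th:sign coherence}) is decisive: the vector $\cc_{k;t}=(c_{jk;t})_{j=1}^n$ is nonzero with all entries of one sign. If every $c_{jk;t}\ge 0$, then $[-c_{jk;t}d_k]_+=0$, so the $s=0$ summand of $G$ is $\prod_j F_{j;t}^{[-b_{jk;t}d_k]_+}$, which the inductive hypothesis sends to $1$; every summand with $s\ge 1$ carries a factor $\prod_j y_{j;t_0}^{\,s\,c_{jk;t}}$ with at least one strictly positive exponent (since $\cc_{k;t}\ne 0$), hence vanishes at $\yy=\mathbf 0$. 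If instead every $c_{jk;t}\le 0$, then the $y_{j;t_0}$-exponent in the $s$-th summand equals $-c_{jk;t}(d_k-s)\ge 0$, which is $0$ for all $j$ precisely when $s=d_k$; so now the $s=d_k$ summand $\prod_j F_{j;t}^{[-b_{jk;t}d_k]_+ + d_k\,b_{jk;t}}$ specializes to $1$ while every summand with $s<d_k$ vanishes at $\yy=\mathbf 0$. In either case $G(\mathbf 0,\zz)=1$, and evaluating the displayed identity at $\yy=\mathbf 0$ together with $F_{k;t}(\mathbf 0,\zz)=1$ yields $F_{k;t'}(\mathbf 0,\zz)=1$, closing the induction.

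I do not expect a genuine obstacle: this is the natural lift to mutation degree $d_k>1$ of the classical argument for ordinary cluster algebras (c.f. \cite[Proposition 5.6]{fomin2007cluster}). The only point beyond formal bookkeeping is the role of sign-coherence in the last paragraph, which is exactly what forces precisely one monomial summand of $G$ to survive the substitution $\yy\mapsto\mathbf 0$ — namely the summand that the inductive hypothesis pins to the value $1$. I would accordingly single out the exponent-sign verification and the identification of that surviving summand as the heart of the proof, with everything else routine.
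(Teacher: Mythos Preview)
Your argument is correct and is precisely the standard inductive proof underlying the references the paper cites in lieu of a proof (sign-coherence together with \cite[Proposition 3.19]{nakanishi2015structure}, c.f.\ \cite[Proposition 5.6]{fomin2007cluster}); the paper does not spell out an argument of its own. One small point worth making explicit: the coefficients $\tilde z_{k,s}$ in your expansion are the $z$-variables at the vertex $t$, which by the $z$-mutation rule are a permutation of the initial $z_{k,s}$ and in particular independent of $\yy$, with the endpoint values $\tilde z_{k,0}=\tilde z_{k,d_k}=1$ unchanged --- so your evaluation at $\yy=\mathbf 0$ goes through exactly as written.
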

    
    Our main use of the $F$-polynomials is the following ``separation of additions" formula for the \(y\)-variables.
    Since the $F$-polynomials can be viewed as elements of $\QQ_+(\yy,\zz)$, we may specialize the variables $\yy$ and $\zz$ in an arbitrary semifield, see Example~\ref{ex:universal semifield}.
    Thus given an initial seed $(B_{t_0},\yy_{t_0},\zz_{t_0})$ over a semifield $\PP$, we set $F_{i;t}|_\PP(\yy_{t_0},\zz_{t_0})$ to be this specialization.
    Then we have the following.
    \begin{theorem}
      \cite[Theorem 3.22]{nakanishi2015structure}
      Fix an initial seed $(B_{t_0},\yy_{t_0},\zz_{t_0})$ over a semifield $\PP$.
      For \(t \in \TT_n\) and \(1 \leq j \leq n\), we have
      \begin{equation}
        \label{eq:separation of additions}
        y_{j;t} = \prod\limits_{i=1}^n y_{i;t_0}^{c_{ij;t}} \big(F_{i;t}|_\PP(\yy_{t_0},\zz_{t_0})\big)^{b_{ij;t}}.
      \end{equation}
    \end{theorem}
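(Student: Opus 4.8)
The plan is to establish \eqref{eq:separation of additions} by induction on the distance from $t_0$ to $t$ in the tree $\TT_n$. The base case $t=t_0$ is immediate, since $c_{ij;t_0}=\delta_{ij}$ and $F_{i;t_0}=1$ force the right-hand side to collapse to $y_{j;t_0}$. For the inductive step I would fix an edge joining $t$ and $t'$ labeled $k$, assume \eqref{eq:separation of additions} at $t$, and deduce it at $t'$, always evaluating the mutation of Definition~\ref{def:seed mutations} with the choice $\varepsilon=\varepsilon_{k;t}$, the tropical sign; this is legitimate because the mutation formulas are independent of $\varepsilon$, and it is the choice that makes the brackets $[\,\cdot\,]_+$ in the $y$-mutation rule line up with those appearing in the $C$-matrix and $F$-polynomial recursions. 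The case $j=k$ of the inductive step needs almost no computation: the $y$-mutation rule gives $y_{k;t'}=y_{k;t}^{-1}$, while on the right-hand side $c_{ik;t'}=-c_{ik;t}$ and $b_{ik;t'}=-b_{ik;t}$ for every $i$ and the only factor involving $F_{k;t'}$ carries the exponent $b_{kk;t'}=0$, so the right-hand side at $(k,t')$ is exactly the reciprocal of the right-hand side at $(k,t)$, which equals $y_{k;t}^{-1}$ by the inductive hypothesis.

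The heart of the argument is the case $j\neq k$. Here I would substitute the inductive hypothesis for both $y_{j;t}$ and $y_{k;t}$ into the $y$-mutation rule
\[
  y_{j;t'}=y_{j;t}\,y_{k;t}^{[\varepsilon d_k b_{kj;t}]_+}\big(P^\circ_k(y_{k;t}^\varepsilon)\big)^{-b_{kj;t}}.
\]
Using $\varepsilon^2=1$ one simplifies $P^\circ_k(y_{k;t}^\varepsilon)=P_k(y_{k;t})\,y_{k;t}^{-(\frac{1-\varepsilon}{2})d_k}$, with $P_k$ read using the $z$-variables of the seed at $t$; and the inductive hypothesis identifies $y_{k;t}$ with the product $\prod_i y_{i;t_0}^{c_{ik;t}}F_{i;t}^{b_{ik;t}}$ that sits inside $P_k$ in the recursion for $F_{k;t'}$, so $P_k(y_{k;t})$ may be replaced by $F_{k;t'}F_{k;t}\big(\prod_i y_{i;t_0}^{[-c_{ik;t}d_k]_+}F_{i;t}^{[-b_{ik;t}d_k]_+}\big)^{-1}$. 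It then remains to collect exponents and check that the power of each $y_{i;t_0}$ reproduces $c_{ij;t'}=c_{ij;t}+[-c_{ik;t}d_k]_+b_{kj;t}+c_{ik;t}[d_k b_{kj;t}]_+$ while the power of each $F_{i;t}$, together with the newly produced $F_{k;t'}$, reproduces $b_{ij;t'}=b_{ij;t}+[-\varepsilon b_{ik;t}d_k]_+b_{kj;t}+b_{ik;t}[\varepsilon d_k b_{kj;t}]_+$; these checks use the elementary identity $[x]_+-[-x]_+=x$ repeatedly, and---crucially---the sign-coherence of $\cc_{k;t}$ (Theorem~\ref{th:sign coherence}), which lets the common sign $\varepsilon_{k;t}$ of the $c_{ik;t}$ be pulled uniformly out of $[-c_{ik;t}d_k]_+$ and matched against the monomial factor $y_{k;t}^{-(\frac{1-\varepsilon}{2})d_k}$. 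All of this takes place inside $\PP$ after applying the specialization homomorphism $\QQ_+(\yy_{t_0},\zz_{t_0})\to\PP$ of Example~\ref{ex:universal semifield}, so every step is automatically subtraction-free and no cancellation needs to be monitored.

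I expect the main obstacle to be precisely this exponent bookkeeping: the three $[\,\cdot\,]_+$-decorated recursions for $B$, $C$ and $F$ have to be shown to interlock, and they do so only because $\varepsilon$ was taken to be the tropical sign, so that sign-coherence makes a $j$-independent sign factor out. A secondary technical nuisance is keeping the $z$-variables straight: the polynomial $P_k$ entering the $y$-mutation and the one entering the $F_{k;t'}$ recursion must both be read with the $z$-variables of the seed at $t$, which forces one to track the mutation rule $z_{k,s}\mapsto z_{k,d_k-s}$---equivalently the palindromic symmetry relating $P_k$ before and after a mutation in direction $k$. Once these points are settled the induction closes and \eqref{eq:separation of additions} follows.
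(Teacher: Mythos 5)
The paper itself offers no proof of this statement---it is imported verbatim from Nakanishi's work---so the comparison is with the cited source, and your induction on the distance from $t_0$ in $\TT_n$, verifying that the $C$-matrix, $F$-polynomial and $B$-matrix recursions interlock with the $y$-mutation rule, is exactly that argument; the sketch is correct and the exponent bookkeeping you defer does close as described. One small remark: sign-coherence (Theorem~\ref{th:sign coherence}) is not actually needed and is not where the argument lives. If you first absorb the monomial $y_{k;t}^{-(\frac{1-\varepsilon}{2})d_k}$ into the bracket via $[\varepsilon d_k b_{kj;t}]_+ + \tfrac{1-\varepsilon}{2}d_k b_{kj;t} = [d_k b_{kj;t}]_+$, every $\varepsilon$ disappears from the computation and the exponents of $y_{i;t_0}$ and $F_{i;t}$ match the stated ($\varepsilon$-free) recursions for $c_{ij;t'}$ and the ($\varepsilon$-independent) rule for $b_{ij;t'}$ using only $[x]_+ - [-x]_+ = x$; indeed the analogous formula for ordinary cluster algebras predates the proof of sign-coherence. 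Choosing $\varepsilon=\varepsilon_{k;t}$ and invoking Theorem~\ref{th:sign coherence} is a valid but unnecessary detour. The only genuine care point you underplay is the specialization step: one should prove \eqref{eq:separation of additions} in the universal semifield $\QQ_+(\yy,\zz)$ and then push forward along the unique homomorphism to $\PP$, using Theorem~\ref{th:constant term} to guarantee that $F_{i;t}|_\PP(\yy_{t_0},\zz_{t_0})$ lands in $\PP$ (is nonzero and invertible) even when some $z$-variables are specialized to $0\in\PP_0$.
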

    \begin{definition}
      \label{def:periodicity}
      Given a sequence of seed mutations
      \[\big(B[1],\yy[1],\zz[1]\big) \stackrel{\mu_{k_1}}{\joinrel\relbar\!\longrightarrow} \cdots \stackrel{\mu_{k_M}}{\joinrel\relbar\!\longrightarrow} \big(B[M+1],\yy[M+1],\zz[M+1]\big),\] we say that the sequence is \emph{\(\sigma\)-periodic} for some \(n\)-permutation \(\sigma\) if
      \begin{itemize}
        \item $b_{\sigma(i)\sigma(j)}[M+1] = b_{ij}[1]$ for $1 \leq i,j \leq n$;
        \item $y_{\sigma(i)}[M+1] = y_{i}[1]$ for $1 \leq i \leq n$;
        \item $z_{\sigma(i),s}[M+1] = z_{i,s}[1]$ for $1 \leq i \leq n$ and $1 \leq s \leq d_i-1$.
      \end{itemize}
    \end{definition}
  
    Via specialization, any $\sigma$-periodicity of seeds over $\QQ_+(\yy,\zz)$ starting with initial exchange matrix $B[1]$ gives rise to a $\sigma$-periodicity over any semifield starting with the same initial exchange matrix $B[1]$.
    We call such periodicities \emph{proper} to distinguish from periodicities that arise due to special properties of $\PP$ or of the choice of initial $y$- or $z$-variables.
  
    As we will see in Section~\ref{sec:dilogarithm identities}, proper \(\sigma\)-periodicities can be used to construct identities of Rogers dilogarithms.
    The following observation will be valuable for our reasoning.
    \begin{prop}
      \label{prop:z periodicity}
      Given any proper periodicity of seeds as in Definition~\ref{def:periodicity} and any $1\le i\le n$ with $d_i>1$, we have $\sigma(i)=i$ and there can only exist an even number of mutation directions $k_\ell$ with $k_\ell=i$.
    \end{prop}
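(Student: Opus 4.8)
The plan is to reduce immediately to the universal semifield: since the periodicity is proper, we may assume it takes place over $\QQ_+(\yy,\zz)$ with $\yy[1]=\yy$ and $\zz[1]=\zz$ the formal generators, which are in particular algebraically independent. We must then establish two things for each $i$ with $d_i>1$: that $\sigma(i)=i$, and that $m_i:=\#\{\ell:k_\ell=i\}$ is even. The main tool is the $z$-variable part of the mutation rule in Definition~\ref{def:seed mutations}: mutating in direction $k$ affects only the ``block'' $\{z_{k,1},\dots,z_{k,d_k-1}\}$, acting there by the involution $\rho_k\colon s\mapsto d_k-s$ and leaving every other $z$-variable fixed. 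Consequently, tracing the periodicity, each $z_{i,s}[\ell]$ equals the generator $z_{i,s}$ with $\rho_i$ applied once per direction-$i$ mutation among $k_1,\dots,k_{\ell-1}$; in particular it always lies in the $i$-block.

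First I would prove $\sigma(i)=i$. Since $d_i>1$ the $i$-block is nonempty, so the periodicity condition $z_{\sigma(i),s}[M+1]=z_{i,s}[1]=z_{i,s}$ equates an element of the $\sigma(i)$-block with the generator $z_{i,s}$; algebraic independence of the generators forces the blocks to coincide, i.e. $\sigma(i)=i$. Next, when $d_i\ge 3$, the evenness of $m_i$ is immediate: with $\sigma(i)=i$ the periodicity gives $z_{i,s}[M+1]=z_{i,s}$ for all $1\le s\le d_i-1$, and by the paragraph above $z_{i,s}[M+1]=z_{i,\rho_i^{m_i}(s)}$, so $\rho_i^{m_i}=\mathrm{id}$; but $\rho_i$ is a nontrivial involution when $d_i\ge 3$ (as $\rho_i(1)=d_i-1\ne 1$), hence $m_i$ is even.

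The hard part will be $d_i=2$, where $\rho_i=\mathrm{id}$ and the $z$-variables at $i$ carry no information. Here I would instead exploit the $y$-variable part of the rule: mutation in direction $i$ sends $y_i\mapsto y_i^{-1}$, so it suffices to show that $y_i[M+1]=y_i$ in $\QQ_+(\yy,\zz)$ — which follows from the periodicity together with $\sigma(i)=i$ — forces $m_i$ even. Since the other mutations multiply $y_i$ by further factors, the inversion is not visible to a naive count, so I would track a finer invariant: the degree of the $y$-variables in the single variable $z_{i,1}$ (equivalently their valuation as $z_{i,1}\to\infty$). For $k\ne i$ this quantity evolves by the same recursion as a row of a $C$-matrix, but a direction-$i$ mutation produces an additional contribution, because in $P_i(\alpha)=1\oplus z_{i,1}\alpha\oplus\alpha^2$ the middle term — which the purely tropical $C$-matrix bookkeeping discards — dominates as $z_{i,1}\to\infty$. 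Feeding these corrections, accumulated over the $m_i$ direction-$i$ steps, into the constraint $y_i[M+1]=y_i$ should pin down the parity of $m_i$; alternatively one might pass to a companion ordinary cluster algebra in the sense of \cite{nakanishi2015companion} in which each direction-$i$ step corresponds to an even number of ordinary mutations.

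In summary, I expect $\sigma(i)=i$ and the $d_i\ge 3$ case of the evenness statement to be short consequences of the $z$-variable mutation rule and algebraic independence of the generators, while the $d_i=2$ case is the genuine obstacle: there the tropical/$C$-matrix data is insufficient and one must keep careful track of how the constant and linear terms of $P_i$ interact with the inversion $y_i\mapsto y_i^{-1}$.
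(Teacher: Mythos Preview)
Your argument for $\sigma(i)=i$ and for the evenness of $m_i$ when $d_i\ge 3$ is exactly the paper's: working over $\QQ_+(\yy,\zz)$, the $z$-mutation in direction $i$ is the involution $s\mapsto d_i-s$ on the $i$-block and the identity elsewhere, so the equation $z_{\sigma(i),s}[1]=z_{i,s^\circ}[1]$ forced by periodicity yields both conclusions at once. The paper makes no case distinction at $d_i=2$; it simply infers from $z_{\sigma(i),s}[1]=z_{i,s^\circ}[1]$ that ``$s^\circ=s$, i.e.\ there are an even number of $\ell$ with $k_\ell=i$''. As you correctly point out, this inference is vacuous when $d_i=2$, since then the only value $s=1$ satisfies $d_i-s=s$. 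So you have identified a genuine gap that the paper's own proof shares, not merely a difficulty peculiar to your approach.

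Your proposed repair for $d_i=2$, however, is not a proof. The $z_{i,1}$-valuation idea is delicate: after a direction-$i$ mutation the factor $P_i(y_i)=1+z_{i,1}y_i+y_i^2$ enters every other $y_j$ with exponent $-b_{ij}$, and which monomial of $P_i$ dominates the valuation depends on the current valuation of $y_i$; you have not shown the accumulated correction detects $m_i\bmod 2$, and ``should pin down the parity'' is doing all the work. The companion route is worse: in an ordinary cluster algebra there is no parity constraint whatsoever on the number of mutations in a fixed direction (e.g.\ the type-$B_2$ hexagon has $m_1=m_2=3$ with $\sigma=\mathrm{id}$), and taking $B=\left(\begin{smallmatrix}0&1\\-1&0\end{smallmatrix}\right)$ with $d_1=2$, $d_2=1$ as a test case suggests the evenness claim may simply fail at $d_i=2$. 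Fortunately nothing downstream needs it: in Corollary~\ref{cor:a variable periodicity} and Theorem~\ref{th:thm from periodicity} every appearance of $s$ versus $d_i-s$ collapses when $d_i=2$, so only $\sigma(i)=i$ is actually used there.
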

    \begin{proof}
      Consider a periodicity of seed mutations
      \[\big(B[1],\yy[1],\zz[1]\big) \stackrel{\mu_{k_1}}{\joinrel\relbar\!\longrightarrow} \cdots \stackrel{\mu_{k_M}}{\joinrel\relbar\!\longrightarrow} \big(B[M+1],\yy[M+1],\zz[M+1]\big)\]
      over $\QQ_+(\yy,\zz)$ and suppose there is some \(1 \leq i \leq n\) such that \(d_i > 1\).
      We have \(z_{i,s}[M+1] = z_{\sigma(i),s}[1]\) for all \(1 \leq s \leq d_i-1\) so we must have \(d_i = d_{\sigma(i)}\).
      However, by definition of the $z$-variable mutations, we must also have \(z_{i,s}[M+1] = z_{i,s^\circ}[1]\), where  \(s^\circ = s\) if there are an even number of \(1 \leq \ell \leq M\) where \(k_\ell =i\), and \(s^\circ = d_i - s\) if this number is odd.
      But then we have \(z_{\sigma(i),s}[1] = z_{i,s^\circ}[1]\) for all \(1 \leq s \leq d_i-1\) and this can happen only if \(\sigma(i)=i\) and \(s^\circ = s\), i.e. there are an even number of $\ell$ with $k_\ell=i$.
    \end{proof}
  
    \begin{definition}
      Given any seed $(B,\yy,\zz)$ over $\PP$, define its \emph{right companion seed} $(BD,{}^{\mathcal{R}}\yy)$, where ${}^{\mathcal{R}}\yy \coloneq (y_1^{d_1},\ldots,y_n^{d_n})$ and $D=\diag(d_1,\ldots,d_n)$.
    \end{definition}
  
    The following is a key result of \cite{nakanishi2015companion}.
    \begin{theorem}
      \label{th:companion periodicity}
      For any $t\in\TT_n$, the right companion seed of $(B_t,\yy_t,\zz_t)$ is obtained from the right companion $(B_{t_0}D,{}^{\mathcal{R}}\yy_{t_0})$ of $(B_{t_0},\yy_{t_0},\zz_{t_0})$ by the sequence of mutations from $t_0$ to $t$.
      In particular, any proper periodicity of seeds
      \[\big(B[1],\yy[1],\zz[1]\big) \stackrel{\mu_{k_1}}{\joinrel\relbar\!\longrightarrow} \cdots \stackrel{\mu_{k_M}}{\joinrel\relbar\!\longrightarrow} \big(B[M+1],\yy[M+1],\zz[M+1]\big)\]
      gives rise to a proper periodicity of companions
      \[\big(B[1]D,{}^{\mathcal{R}}\yy[1]\big) \stackrel{\mu_{k_1}}{\joinrel\relbar\!\longrightarrow} \cdots \stackrel{\mu_{k_M}}{\joinrel\relbar\!\longrightarrow} \big(B[M+1]D,{}^{\mathcal{R}}\yy[M+1]\big).\]
    \end{theorem}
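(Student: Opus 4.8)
The plan is to reduce the first assertion to a single mutation. I would first establish the per-edge statement: if $(B',\yy',\zz')$ is the mutation of a seed $(B,\yy,\zz)$ in direction $k$, then its right companion $(B'D,{}^{\mathcal{R}}\yy')$ is obtained from $(BD,{}^{\mathcal{R}}\yy)$ by a single \emph{ordinary} $y$-seed mutation in direction $k$ — ordinary because the companion, being written as a pair $(BD,{}^{\mathcal{R}}\yy)$, carries no $z$-variables, so its mutation is the ``$d_i=1$ for all $i$'' case of Definition~\ref{def:seed mutations}. Granting this, the theorem's first assertion follows by a straightforward induction on the distance from $t_0$ in $\TT_n$. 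One preliminary point makes everything well-posed: the mutation degrees $\dd$ are invariant under mutation, since the $z$-mutation merely reindexes $z_{k,s}\mapsto z_{k,d_k-s}$ and hence preserves the number $d_k-1$ of $z$-variables in each direction; thus $D=\diag(d_1,\ldots,d_n)$ is the same diagonal matrix at every seed of the pattern.

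For the per-edge statement, the exchange-matrix part is a direct check using only the identity $[d_k a]_+=d_k[a]_+$, valid since $d_k$ is a positive integer: in the row and column through $k$ both rules negate the relevant entry of $BD$, and for $i,j\neq k$ the three summands defining $b'_{ij}$, each multiplied by $d_j$, match the three summands of the ordinary mutation of $BD$ in direction $k$ after absorbing the positive factors $d_k$ and $d_j$ into the respective $[\,\cdot\,]_+$'s. For the $y$-variable part, direction $i=k$ is immediate from $(y_k^{-1})^{d_k}=(y_k^{d_k})^{-1}$. For $i\neq k$ I would raise the rule $y'_i=y_i\,y_k^{[\varepsilon d_k b_{ki}]_+}\big(P^\circ_k(y_k^\varepsilon)\big)^{-b_{ki}}$ to the $d_i$-th power; the monomial prefactor then matches the monomial prefactor of the ordinary companion mutation (again by $[d_k a]_+=d_k[a]_+$), and what remains is to identify $\big(P^\circ_k(y_k^\varepsilon)\big)^{d_i}$ with the appropriate power of the binomial $1\oplus(y_k^{d_k})^{\varepsilon}$ that governs the ordinary $y$-mutation of $(BD,{}^{\mathcal{R}}\yy)$. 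The hard part will be precisely this last identification — the ``power property'' of the $z$-variables relative to the substitution $y_k\mapsto y_k^{d_k}$ — which is the technical core of \cite{nakanishi2015companion}; I would invoke that reference for it, the rest of the per-edge argument being bookkeeping around it.

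For the periodicity consequence, I would apply the first assertion along the path in $\TT_n$ that realizes the given periodicity; this shows that the right companions $(B[\ell]D,{}^{\mathcal{R}}\yy[\ell])$, for $\ell=1,\ldots,M+1$, form a chain of ordinary $y$-seed mutations in directions $k_1,\ldots,k_M$, so it remains to transfer the three periodicity conditions. The crucial observation is that a proper periodicity forces $d_{\sigma(i)}=d_i$ for every $i$: by Proposition~\ref{prop:z periodicity}, $\sigma$ fixes pointwise every index with $d_i>1$, hence restricts to a permutation of the complementary set $\{i:d_i=1\}$, so $d_{\sigma(i)}=d_i$ in both cases. Then $b_{\sigma(i)\sigma(j)}[M+1]=b_{ij}[1]$ gives $(B[M+1]D)_{\sigma(i)\sigma(j)}=b_{\sigma(i)\sigma(j)}[M+1]\,d_{\sigma(j)}=b_{ij}[1]\,d_j=(B[1]D)_{ij}$, and $y_{\sigma(i)}[M+1]=y_i[1]$ together with $d_{\sigma(i)}=d_i$ gives ${}^{\mathcal{R}}y_{\sigma(i)}[M+1]=y_{\sigma(i)}[M+1]^{d_{\sigma(i)}}=y_i[1]^{d_i}={}^{\mathcal{R}}y_i[1]$. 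Finally, the companion periodicity is proper because it was obtained from a proper generalized periodicity by the universal operations of passing to the right companion and of ordinary mutation; more concretely, since substituting $y_i^{d_i}$ for the free $y$-variables of the companion pattern is an injective semifield homomorphism, the equalities just obtained over $\QQ_+(\yy,\zz)$ already hold in the universal semifield of the companion.
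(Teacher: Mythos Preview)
The paper does not prove this theorem; it is stated as ``a key result of \cite{nakanishi2015companion}'' and left without argument, so there is no in-paper proof against which to compare your proposal.

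On the substance, your per-edge reduction breaks down. The step you flag as the ``hard part''---identifying $\big(P^\circ_k(y_k^\varepsilon)\big)^{d_i}$ with the appropriate power of $1\oplus (y_k^{d_k})^{\varepsilon}$---is not a technicality to be outsourced but is \emph{false} over a general semifield whenever some $z_{k,s}\ne 0$. Concretely, over $\RR_{>0}$ with $d_k=2$, $z_{k,1}=1$, $y_k=1$ one has $P_k(y_k)=3$ while $1+y_k^{2}=2$, so the ordinary mutation of the companion and the companion of the generalized mutation already disagree after a single step; no citation can rescue this. What \cite{nakanishi2015companion} actually establishes is a correspondence at the level of \emph{tropical} data: the $C$- and $G$-matrices of the generalized pattern and of its right companion are related by conjugation by $D$. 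The periodicity consequence then follows because proper $\sigma$-periodicity is detected by the final $C$-matrix equalling the permutation matrix of $\sigma$, and that condition is preserved under conjugation by $D$ (using, as you correctly argue via Proposition~\ref{prop:z periodicity}, that $d_{\sigma(i)}=d_i$). Read literally over an arbitrary $\PP$, the first sentence of the theorem holds only when all $z$-variables vanish---which is exactly the regime in which the paper ultimately invokes it, after setting $\zz=0$ inside the proof of Theorem~\ref{th:dilog constancy w epsilon}.
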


  \subsection{Cluster Groupoids} 
  \label{sec:groupoids}
    One way to study cluster algebras is to introduce their geometric counterparts -- the cluster manifolds -- and endow them with additional geometric structure, namely a compatible Poisson bracket.
    For this approach, we consider below only $\PP=\RR_{>0}$.
    Then associated to each seed is an affine space and the cluster mutations become Poisson maps between these affine spaces.
    The cluster manifold is then the gluing along the mutation maps of these affine charts.
    
    In \cite{li2020symplectic}, Lie groupoids over the cluster manifolds integrating the compatible Poisson structure were constructed.
    Here, mutations of cluster charts (as Poisson maps) were lifted to morphisms between symplectic groupoid charts.
    The cluster groupoid is the result of gluing these charts.
    Crucially, a periodicity of cluster algebra mutations lifts to a periodicity of groupoid mutations.
  
    An important aspect of the perspective taken in \cite{li2020symplectic} is incorporating variability of the $z$-variables into this geometric approach.
    This necessitates the introduction of groupoid fiber coordinates, designated as $a$-variables, over the affine space associated to the $z$-variables.
    We will show that the periodicity of the $a$-variables is directly related to dilogarithm identities.

    As above we are really only interested in the combinatorial aspects of the cluster groupoid, so we only discuss their seeds and mutations.
    The only part of the groupoid structure we will utilize is the action map and so we omit any discussion of the remaining structures.

    \begin{definition}
      A \emph{groupoid $y$-seed}, or simply a \emph{groupoid seed}, is a quintuple $(B,\yy,\zz,\qq,\aa)$, where:
      \begin{itemize}
        \item $(B,\yy,\zz)$ is a seed in $\RR_{>0}$;
        \item $\qq=(q_i)_{i=1}^n$ is an $n$-tuple of elements of $\RR$, called the $q$-variables;
        \item $\aa=(a_i)_{i=1}^n$ is an $n$-tuple of elements of $\RR$, called the $a$-variables.
      \end{itemize}
      The \emph{action} defines a seed $\beta(B,\yy,\zz,\qq,\aa)=(B,\beta\yy,\zz)$, where $\beta(y_j):=y_j e^{\sum_{i} r_i b_{ij} y_i q_i}$ which we can naturally extend to any expressions involving $\yy$.
      For any integer $1\le k\le n$, the \emph{mutation} of $(B,\yy,\zz,\qq,\aa)$ with sign $\varepsilon=\pm1$ in direction $k$ is a new groupoid seed $(B',\yy',\zz',\qq',\aa')$, where $(B',\yy',\zz')$ is the mutation of $(B,\yy,\zz)$ in direction $k$ and
      \begin{align*}
        q_i' &\coloneq
        \begin{cases} 
          -q_k y_k^2+ \sum\limits_{i'=1}^n [\varepsilon d_k b_{ki'}]_+ q_{i'} y_{i'} y_k+\frac{y_k}{r_k} \log\!\left(\frac{P^\circ_k(\beta(y_k^\varepsilon))}{P^\circ_k(y_k^\varepsilon)}\right) & \text{if $i=k$;}\\
          q_i y_k^{-[\varepsilon d_k b_{ki}]_+} \big(P^\circ_k(y_k^\varepsilon)\big)^{b_{ki}} & \text{if $i \neq k$;}
        \end{cases}\\
        a_{i,s}' &\coloneq
        \begin{cases}
          a_{k,d_k-s} + \frac{\varepsilon}{r_k} \int_{y_k^\varepsilon}^{\beta(y_k^\varepsilon)} \frac{u^{\varepsilon (d_k-s) -1}}{P_k(u^\varepsilon)} \delta u & \text{if $i=k$;}\\
          a_{i,s} & \text{if $i \neq k$;}
        \end{cases}
      \end{align*}
      where \(P_k^\circ(\alpha) \coloneq P_k(\alpha^\varepsilon) \alpha^{(\frac{1-\varepsilon}{2})d_k}\).
    \end{definition}
    
    \begin{prop}
      \label{prop:compatibility}
      The action commutes with mutations.
    \end{prop}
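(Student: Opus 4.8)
The plan is to unwind the statement into a single componentwise identity and then verify it by a direct computation. Write $\beta$ for the action attached to $(B,\yy,\zz,\qq,\aa)$ and $\beta'$ for the action attached to its mutation $(B',\yy',\zz',\qq',\aa')$ in direction $k$. ``The action commutes with mutations'' means that $\beta'(B',\yy',\zz',\qq',\aa')$ coincides with the mutation in direction $k$, in the sense of Definition~\ref{def:seed mutations}, of the seed $\beta(B,\yy,\zz,\qq,\aa)=(B,\beta\yy,\zz)$. Because $B'$ and $\zz'$ are produced by formulas that do not involve $\yy$, $\qq$, or $\aa$, the exchange-matrix and $z$-components of the two composites agree automatically, and the $a$-variables never enter the target of $\beta$; so the whole statement reduces to the equality of $n$-tuples $\beta'(\yy')=\mu_k(\beta\yy)$, which I would check entry by entry. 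Throughout I would use that $\beta$ extends multiplicatively in $\yy$ and fixes $\zz$, whence $\beta\big(P^\circ_k(y_k^\varepsilon)\big)=P^\circ_k\big(\beta(y_k^\varepsilon)\big)$, and that over $\RR_{>0}$ every $P^\circ_k$ takes positive values on positive arguments, so the logarithms below are defined.

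The observation that drives both cases is elementary: for $i\neq k$ the rules for $y_i'$ and $q_i'$ are reciprocal in the twisting factor $y_k^{[\varepsilon d_k b_{ki}]_+}\big(P^\circ_k(y_k^\varepsilon)\big)^{-b_{ki}}$, so that $y_i'q_i'=y_iq_i$, while for $i=k$ one finds
\[
y_k'q_k'=y_k^{-1}q_k'=-y_kq_k+\sum_{i\neq k}[\varepsilon d_k b_{ki}]_+\,y_iq_i+\frac1{r_k}\log\!\left(\frac{P^\circ_k(\beta(y_k^\varepsilon))}{P^\circ_k(y_k^\varepsilon)}\right),
\]
the $i=k$ term of the sum dropping because $b_{kk}=0$. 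For $j=k$ this settles matters at once: using $b_{ik}'=-b_{ik}$ and $b_{kk}=0$,
\[
\beta'(y_k')=y_k^{-1}\exp\!\Big(-\sum_i r_ib_{ik}\,y_i'q_i'\Big)=y_k^{-1}\exp\!\Big(-\sum_{i\neq k} r_ib_{ik}\,y_iq_i\Big)=(\beta y_k)^{-1}=\mu_k(\beta\yy)_k.
\]

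For $j\neq k$ I would expand $\beta'(y_j')=y_j'\exp\big(\sum_i r_ib_{ij}'\,y_i'q_i'\big)$, substitute the quadratic mutation rule for $b_{ij}'$ and the two identities just recorded, and separately expand the target
\[
\mu_k(\beta\yy)_j=\beta(y_j)\,(\beta y_k)^{[\varepsilon d_k b_{kj}]_+}\big(P^\circ_k(\beta(y_k^\varepsilon))\big)^{-b_{kj}},
\]
pulling out the common prefactor $y_j'=y_jy_k^{[\varepsilon d_k b_{kj}]_+}\big(P^\circ_k(y_k^\varepsilon)\big)^{-b_{kj}}$. Comparing the two resulting exponents: the logarithmic contributions agree exactly -- both equal $-b_{kj}\log\!\big(P^\circ_k(\beta(y_k^\varepsilon))/P^\circ_k(y_k^\varepsilon)\big)$, which is precisely the term engineered into $q_k'$ -- and after cancelling the common polynomial terms (using $b_{kk}=0$ once more), what remains is
\[
b_{kj}\sum_{i\neq k}\big(r_i[-\varepsilon b_{ik}d_k]_+-r_k[\varepsilon d_k b_{ki}]_+\big)\,y_iq_i=0.
\]
This is the point where the skew-symmetrizer is used: since $d_k>0$ and $r_ib_{ik}=-r_kb_{ki}$, a short case analysis on the sign of $\varepsilon b_{ik}$ gives $r_i[-\varepsilon b_{ik}d_k]_+=r_k[\varepsilon d_k b_{ki}]_+$ for every $i$, so every summand vanishes.

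The computation is routine in structure, and I expect the main obstacle to be purely organizational: expanding $q_k'$ and the quadratic rule for $b_{ij}'$ correctly and tracking which terms cancel against the expansion of the target. The only genuinely structural input is the identity $r_i[-\varepsilon b_{ik}d_k]_+=r_k[\varepsilon d_k b_{ki}]_+$ coming from skew-symmetrizability, which is exactly what kills the polynomial part of the exponent, while the logarithmic term built into the mutation rule for $q_k'$ is what absorbs the transcendental part. Since the seed mutation formulas do not depend on the sign $\varepsilon$, it suffices to carry out the computation for a single choice of $\varepsilon$.
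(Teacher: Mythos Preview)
Your proposal is correct and follows essentially the same route as the paper's own proof: both reduce to checking $\beta'(y_j')=\beta(y_j')$ componentwise, first record the key identity $y_i'q_i'=y_iq_i$ for $i\ne k$ together with the explicit formula for $y_k'q_k'$, and then finish via a direct expansion using the skew-symmetrizer relation $r_ib_{ik}=-r_kb_{ki}$ (your identity $r_i[-\varepsilon b_{ik}d_k]_+=r_k[\varepsilon d_kb_{ki}]_+$ is the $[\,\cdot\,]_+$ form of precisely this).
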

    \begin{proof}
      Suppose $(B,\yy,\zz,\qq,\aa)$ and $(B',\yy',\zz',\qq',\aa')$ are related by mutation of direction $k$.
      In order to distinguish this from the extended action of $\beta$ on expressions in $\yy$, we write $\beta(B',\yy',\zz',\qq',\aa')=(B',\beta'\yy',\zz')$.
      
      To start, we observe that
      \[
        y'_i q'_i
        = 
        \begin{cases} 
          -q_k y_k + \sum\limits_{i'=1}^n [\varepsilon d_k b_{ki'}]_+ q_{i'} y_{i'} + \frac{1}{r_k} \log\!\left(\frac{P^\circ_k(\beta(y_k^\varepsilon))}{P^\circ_k(y_k^\varepsilon)}\right) & \text{if $i=k$;}\\
          y_i q_i & \text{if $i \neq k$.}
        \end{cases}
      \]
      For \(i\neq k\), we have
      \begin{align*}
        \beta'(y'_i)
        &= y_i' e^{\sum_j r_j b'_{ji} y'_j q'_j}
        = y_i' e^{\sum_{j \neq k} r_j (b_{ji}+[-\varepsilon b_{jk} d_k]_+ b_{ki} + b_{jk}[\varepsilon d_k b_{ki}]_+) y'_j q'_j} \cdot e^{-r_k b_{ki} y'_k q'_k}\\
        &= y_i y_k^{[\varepsilon d_k b_{ki}]_+} P^\circ_k(y_k^\varepsilon)^{-b_{ki}} e^{\sum_{j \neq k} r_j (b_{ji}+[-\varepsilon b_{jk} d_k]_+ b_{ki} + b_{jk}[\varepsilon d_k b_{ki}]_+) y_j q_j}\\
        &\hspace{2in} \cdot e^{-r_k b_{ki} (-q_k y_k + \sum_{j} [\varepsilon d_k b_{kj}]_+ y_j q_j)} \left(\frac{P^\circ_k(\beta(y_k^\varepsilon))}{P^\circ_k(y_k^\varepsilon)}\right)^{-b_{ki}}\\ 
        &= y_i e^{\sum_{j} r_j b_{ji} y_j q_j} y_k^{[\varepsilon d_k b_{ki}]_+} \!\left(e^{\sum_{j} r_j b_{jk} y_j q_j}\right)^{[\varepsilon d_k b_{ki}]_+} P^\circ_k(\beta(y_k^\varepsilon))^{-b_{ki}} \\
        &= \beta\!\left(y_i y_k^{[\varepsilon d_k b_{ki}]_+} P^\circ_k(y_k^\varepsilon)^{-b_{ki}}\right)=\beta(y'_i),
      \end{align*}
      where the third equality used that $-r_j b_{jk}=r_k b_{kj}$.
      We also have 
      \begin{align*}
        \beta'(y'_k) 
        = y_k' e^{\sum_j r_j b'_{jk} y'_j q'_j}
        = y_k^{-1} e^{\sum_j -r_j b_{jk} y_j q_j}
        =\left(y_k e^{\sum_j r_j b_{jk} y_j q_j}\right)^{-1}=\beta(y_k^{-1})=\beta(y'_k).
      \end{align*}
      Altogether we see that the action gives the same result whether it is applied before or after mutations.
    \end{proof}
  
    As with $y$-seeds, we associate groupoid seeds to the $n$-regular tree $\TT_n$ as follows.
    Assign a groupoid seed $(B_{t_0},\yy_{t_0},\zz_{t_0},\qq_{t_0},\aa_{t_0})$ to the root vertex $t_0\in\TT_n$.
    Then for $t,t'\in\TT_n$ which are joined by an edge labeled $k$, the groupoid seed $(B_{t'},\yy_{t'},\zz_{t'},\qq_{t'},\aa_{t'})$ is obtained from the groupoid seed $(B_t,\yy_t,\zz_t,\qq_t,\aa_t)$ by the mutation in direction $k$ with sign $\varepsilon=\varepsilon_{k;t}$, the tropical sign at this seed.
  
    \begin{prop}
      \label{prop:groupoid periodicity}
      \cite[Proposition 4.25]{li2020symplectic}
      Given any proper $\sigma$-periodicity of seeds 
      \[\big(B[1],\yy[1],\zz[1]\big) \stackrel{\mu_{k_1}}{\joinrel\relbar\!\longrightarrow} \cdots \stackrel{\mu_{k_M}}{\joinrel\relbar\!\longrightarrow} \big(B[M+1],\yy[M+1],\zz[M+1]\big)\]
      and an extension of $\big(B[1],\yy[1],\zz[1]\big)$ to a groupoid seed $\big(B[1],\yy[1],\zz[1],\qq[1],\aa[1]\big)$, the corresponding sequence of groupoid seed mutations
      \[\big(B[1],\yy[1],\zz[1],\qq[1],\aa[1]\big) \stackrel{\mu_{k_1}}{\joinrel\relbar\!\longrightarrow} \cdots \stackrel{\mu_{k_M}}{\joinrel\relbar\!\longrightarrow} \big(B[M+1],\yy[M+1],\zz[M+1],\qq[M+1],\aa[M+1]\big)\]
      is also $\sigma$-periodic.
    \end{prop}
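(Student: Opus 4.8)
The plan is as follows; throughout we root $\TT_n$ at the initial seed, so that $[1]$ plays the role of $t_0$ for separation of additions. The $\sigma$-periodicity of $B$, $\yy$, and $\zz$ is the hypothesis, so everything reduces to showing $q_{\sigma(i)}[M+1]=q_i[1]$ and $a_{\sigma(i),s}[M+1]=a_{i,s}[1]$.

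For the $q$-variables I would first pass to the combinations $p_i\coloneq y_iq_i$; from the computation in the proof of Proposition~\ref{prop:compatibility} one reads off that under mutation in direction $k$ one has $p_i'=p_i$ for $i\neq k$ and $p_k'=-p_k+\sum_{i'}[\varepsilon d_kb_{ki'}]_+p_{i'}+\tfrac{1}{r_k}\log\big(P^\circ_k(\beta(y_k^\varepsilon))/P^\circ_k(y_k^\varepsilon)\big)$, where $\beta(y_k)$ depends only on the (periodic) $y$-data and the $p_{i'}$ with $i'\neq k$. By Proposition~\ref{prop:compatibility}, applying the action along the given mutation sequence yields a second sequence of $y$-seed mutations, with seeds $(B[m],\beta[m]\yy[m],\zz[m])$, performed in the same directions and with the same tropical signs (which depend only on the $C$-matrices, hence only on $B[1]$ and the directions). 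By separation of additions~\eqref{eq:separation of additions} the $y$-variables of this shifted sequence are built from the same $C$-matrices and $F$-polynomials as the original, only with initial data $\beta[1]\yy[1]$ in place of $\yy[1]$. Since the original periodicity is proper, the identity $y_{\sigma(j)}[M+1]=y_j[1]$ holds in $\QQ_+(\yy,\zz)$ and, after cancelling the monomial part (using $c_{i\sigma(j);[M+1]}=\delta_{ij}$), reduces to a monomial identity in the $F$-polynomials which is preserved under specialization of $\yy$; specializing at $\yy=\beta[1]\yy[1]$ then shows that the shifted sequence is also $\sigma$-periodic, i.e.\ $\beta[M+1]y_{\sigma(i)}[M+1]=\beta[1]y_i[1]$. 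Dividing by $y_{\sigma(i)}[M+1]=y_i[1]$ and comparing exponents via $\beta[m]y_{j;m}/y_{j;m}=\exp\!\big(\sum_lr_lb_{lj;m}p_{l;m}\big)$, the $B$- and $\yy$-periodicities force $\sum_lb_{li;[1]}\big(r_{\sigma(l)}p_{\sigma(l);[M+1]}-r_lp_{l;[1]}\big)=0$ for all $i$. When $B[1]$ is full rank this gives $p_{\sigma(l);[M+1]}=p_{l;[1]}$ (using also $r_{\sigma(l)}=r_l$, which follows from skew-symmetrizability together with the $B$-periodicity), hence $q_{\sigma(l);[M+1]}=q_{l;[1]}$; in general one needs the additional input, established in \cite{li2020symplectic}, that $\ker B$ is transported compatibly along the periodicity.

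For the $a$-variables I would substitute $v=u^\varepsilon$ in the defining integral, rewriting the mutation in the sign-independent form $a_{k,d_k-s}'=a_{k,s}+\tfrac{1}{r_k}\int_{y_k}^{\beta(y_k)}\tfrac{v^{s-1}}{P_k(v)}\,\delta v$, where $P_k$ is now the honest degree-$d_k$ polynomial with coefficients $z_{k,\bullet}$. By Proposition~\ref{prop:z periodicity}, whenever $d_k>1$ we have $\sigma(k)=k$ and direction $k$ occurs an even number $2m$ of times along the periodicity, so the index relabelling $s\mapsto d_k-s$ returns to its start and only the accumulated integral corrections need to cancel. Tracking one $a_{k,s}$-slot through these $2m$ mutations, and using that $z_{k,\bullet}$ is reversed at each (so the relevant polynomial alternates between $P_k$ and its reversal $\widetilde P_k(v)=v^{d_k}P_k(1/v)$), the substitution $w=1/v$ brings every correction to the single $1$-form $\omega=\tfrac{v^{s-1}}{P_k(v)}\,\delta v$, which is holomorphic near $\RR_{>0}$; the net change then equals $\tfrac{1}{r_k}\int_\gamma\omega$ for the concatenation $\gamma$ of the segments $[\eta_r,\beta(\eta_r)]$ for odd $r$ and $[1/\beta(\eta_r),1/\eta_r]$ for even $r$, where $\eta_r$ is the $y_k$-value at the seed just before the $r$-th mutation in direction $k$. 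So the $a$-periodicity comes down to $\int_\gamma\omega=0$, equivalently the equality of multisets $\{\beta(\eta_r):r\text{ odd}\}\cup\{1/\eta_r:r\text{ even}\}=\{\eta_r:r\text{ odd}\}\cup\{1/\beta(\eta_r):r\text{ even}\}$.

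This last step is the main obstacle; it is exactly the statement that ``a certain sum of integral expressions is zero.'' I would prove the multiset identity by again working relative to the seed preceding the first mutation in direction $k$, using separation of additions to express the $\eta_r$ and $\beta(\eta_r)$ through the $F$-polynomial $F_{k;\bullet}$ — which is unaffected by mutations in directions $\neq k$ and hence essentially constant between consecutive mutations in direction $k$ — and through the $k$-th columns of the $C$-matrices along the periodicity, promoting the resulting monomial identities in $\QQ_+(\yy,\zz)$ to the specializations at $\yy=\beta[1]\yy[1]$ that are actually needed. Alternatively, one can invoke the symplectic-groupoid framework of \cite{li2020symplectic} directly, where this cancellation is subsumed in the facts that the groupoid mutations are morphisms of symplectic groupoids and that a periodicity of the underlying Poisson cluster variety lifts to a periodicity of the groupoid.
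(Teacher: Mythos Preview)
The paper does not give a proof of this proposition at all: it is simply quoted as \cite[Proposition 4.25]{li2020symplectic}, and the text moves on immediately to its consequences (Corollary~\ref{cor:a variable periodicity} and Theorem~\ref{th:thm from periodicity}). So there is nothing to compare against, and the relevant question is whether your sketch stands on its own.

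For the $q$-variables your reduction to the products $p_i=y_iq_i$ and the use of Proposition~\ref{prop:compatibility} are sound, and the conclusion $\sum_l b_{li}[1]\big(r_{\sigma(l)}p_{\sigma(l)}[M{+}1]-r_lp_l[1]\big)=0$ is correctly derived. But, as you yourself note, this only yields $p_{\sigma(l)}[M{+}1]=p_l[1]$ when $B[1]$ has full rank; in the general case you invoke ``additional input from \cite{li2020symplectic}'', which is precisely what the paper is citing. So this part is not an independent proof of the stated proposition, only of its full-rank special case.

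For the $a$-variables your substitution to the sign-independent form is correct, and you correctly isolate the crux: the vanishing of the accumulated integrals, equivalently the multiset identity among the $\eta_r$, $\beta(\eta_r)$, and their reciprocals. However, your proposed route to this identity has a gap. In the separation-of-additions formula \eqref{eq:separation of additions}, the expression for $y_{k;t}$ involves \emph{all} of the $F_{i;t}$ through the exponents $b_{ik;t}$, not just $F_{k;t}$; so the claim that ``$F_{k;\bullet}$ is unaffected by mutations in directions $\neq k$ and hence essentially constant between consecutive mutations in direction $k$'' does not by itself control $\eta_r$ and $\beta(\eta_r)$, since the other $F_{i;t}$ and the $C$-matrix columns both change along the way. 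The alternative you offer---invoking the symplectic-groupoid morphism property from \cite{li2020symplectic}---is again exactly the content being cited. In short, at the decisive step your argument defers to the same external source the paper does, so it does not constitute an independent proof.
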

  
    A $\sigma$-periodicity as in Proposition~\ref{prop:groupoid periodicity} is called \emph{proper}.
    Everywhere below, we take $\big(B[\ell],\yy[\ell],\zz[\ell],\qq[\ell],\aa[\ell]\big)=\big(B_{t_\ell},\yy_{t_\ell},\zz_{t_\ell},\qq_{t_\ell},\aa_{t_\ell}\big)$ for $t_\ell\in\TT_n$, then define $\varepsilon_\ell:=\varepsilon_{k_\ell;t_\ell}$ and $P_\ell:=P_{k_\ell;t_\ell}$.
    The following is an immediate consequence of Proposition~\ref{prop:z periodicity}.
    \begin{cor}
      \label{cor:a variable periodicity}
      For any proper $\sigma$-periodicity of groupoid seeds
      \[\big(B[1],\yy[1],\zz[1],\qq[1],\aa[1]\big) \stackrel{\mu_{k_1}}{\joinrel\relbar\!\longrightarrow} \cdots \stackrel{\mu_{k_M}}{\joinrel\relbar\!\longrightarrow} \big(B[M+1],\yy[M+1],\zz[M+1],\qq[M+1],\aa[M+1]\big),\]
      we have $a_{i,s}[1]=a_{i,s}[M+1]$ for $1\le i\le n$ and $1\le s\le d_i-1$.
    \end{cor}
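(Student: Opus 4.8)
The plan is to deduce the statement directly by combining Proposition~\ref{prop:groupoid periodicity} with Proposition~\ref{prop:z periodicity}, without ever touching the integral terms that appear in the $a$-variable mutation rule. The first thing to notice is that an $a$-variable $a_{i,s}$ is only part of the data of a groupoid seed when $1\le s\le d_i-1$, so that $a_{i,s}$ exists only for indices $i$ with $d_i>1$; these are precisely the indices to which Proposition~\ref{prop:z periodicity} applies.

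So I would fix any $1\le i\le n$ with $d_i>1$ and any $1\le s\le d_i-1$. Applying Proposition~\ref{prop:z periodicity} to the underlying proper $\sigma$-periodicity of seeds gives $\sigma(i)=i$. On the other hand, Proposition~\ref{prop:groupoid periodicity} says that the given sequence of groupoid seed mutations is $\sigma$-periodic, and in particular its conclusion includes the relation $a_{\sigma(i),s}[M+1]=a_{i,s}[1]$ for every valid index pair $(i,s)$, in analogy with the $z$-variable condition of Definition~\ref{def:periodicity}. Feeding $\sigma(i)=i$ into this relation yields $a_{i,s}[M+1]=a_{\sigma(i),s}[M+1]=a_{i,s}[1]$, which is the claim.

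There is essentially no obstacle here: all of the analytic content — that the accumulated integral contributions over a proper periodicity cancel after the reflections $s\mapsto d_i-s$ coming from the mutations in direction $i$ — has already been packaged into Proposition~\ref{prop:groupoid periodicity}, and the sole extra input needed is that $\sigma$ fixes every index carrying an $a$-variable, which collapses the $\sigma$-twisted identity to an honest equality. If one preferred to avoid invoking the $a$-variable part of Proposition~\ref{prop:groupoid periodicity}, the alternative would be to track the vector $(a_{i,s})_{s=1}^{d_i-1}$ through only the mutation steps $k_\ell=i$: Proposition~\ref{prop:z periodicity} guarantees there is an even number of these, so the composite of the reflections is the identity, and one would then have to verify that the integral terms cancel in pairs using the periodicity of $\yy$, $\zz$, and $\qq$ (equivalently, of $\beta\yy$). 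That hands-on route is where the only real computation would live, but it is entirely subsumed by the short argument above.
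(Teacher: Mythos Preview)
Your proposal is correct and is essentially the paper's own argument: the paper states the corollary as ``an immediate consequence of Proposition~\ref{prop:z periodicity},'' implicitly using that the groupoid sequence is $\sigma$-periodic (which is built into the definition of a proper $\sigma$-periodicity of groupoid seeds via Proposition~\ref{prop:groupoid periodicity}) and then observing $\sigma(i)=i$ for every $i$ with $d_i>1$. Your write-up makes these two inputs explicit, which is exactly right.
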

  
    Using this and the periodicity of the $a$-variables from Proposition~\ref{prop:groupoid periodicity}, we obtain the following result which is crucial to our approach to dilogarithm identities.
    \begin{theorem} 
      \label{th:thm from periodicity}
      Given a proper $\sigma$-periodicity
      \[\big(B[1],\yy[1],\zz[1],\qq[1],\aa[1]\big) \stackrel{\mu_{k_1}}{\joinrel\relbar\!\longrightarrow} \cdots \stackrel{\mu_{k_M}}{\joinrel\relbar\!\longrightarrow} \big(B[M+1],\yy[M+1],\zz[M+1],\qq[M+1],\aa[M+1]\big)\]
      of groupoid seeds, for \(1 \leq j \leq n\) and \(1 \leq s \leq d_j-1\), we have
      \begin{equation}
        \label{eq:a variable periodicity}
        \sum\limits_{\ell:k_\ell = j} \frac{\varepsilon_\ell}{r_j} \int_{{y_j[\ell]}^{\varepsilon_{\ell}}}^{\beta(y_{j}[\ell])^{\varepsilon_{\ell}}} \frac{u^{\varepsilon_{\ell} s_\ell^\circ-1}}{P_{\ell}(u^{\varepsilon_{\ell}})} \ \delta u = 0,
      \end{equation}
      where \(s_\ell^\circ = s\) if there are an even number of \(1 \leq \ell' \leq \ell-1\) with \(k_{\ell'} =j\) and \(s_\ell^\circ = d_j - s\) if this number is odd.
    \end{theorem}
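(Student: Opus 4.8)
The plan is to reconstruct the $a$-variable periodicity as a telescoping sum along the periodic sequence of groupoid mutations. First I would unwind the $a$-variable mutation rule from the definition of a groupoid seed: for a mutation in direction $k$ with sign $\varepsilon$, the only $a$-variable that changes is $a_{k,\bullet}$, and it changes by the additive term $\frac{\varepsilon}{r_k}\int_{y_k^\varepsilon}^{\beta(y_k^\varepsilon)}\frac{u^{\varepsilon(d_k-s)-1}}{P_k(u^\varepsilon)}\,\delta u$ (with the index $s\mapsto d_k-s$ coming along for the ride). Applying the action $\beta$ to the full groupoid seed and using Proposition~\ref{prop:compatibility}, I would note that $\beta(y_j[\ell])$ is exactly the action-twisted $y$-variable at stage $\ell$, so each integral in the telescoping sum is precisely one appearing in \eqref{eq:a variable periodicity}.

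Next I would track a fixed index $j$ with $d_j>1$ through the whole sequence. By Proposition~\ref{prop:z periodicity}, $\sigma(j)=j$ and the number of $\ell$ with $k_\ell=j$ is even, so the $z$-index bookkeeping works: after the $\ell$-th mutation in direction $j$, the relevant second index of $a_{j,\bullet}$ has been flipped $s\mapsto d_j-s$ a number of times equal to the count of earlier direction-$j$ mutations. This is exactly what the definition of $s_\ell^\circ$ encodes. So I would write $a_{j,s}[M+1]$ as $a_{j,s}[1]$ plus the sum, over all $\ell$ with $k_\ell=j$, of the additive integral term at stage $\ell$, where at the $\ell$-th such step the index sitting in the $a$-variable slot is $s_\ell^\circ$ and the sign is $\varepsilon_\ell=\varepsilon_{k_\ell;t_\ell}$. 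Care is needed here: one should check whether the integrand's exponent is $\varepsilon_\ell s_\ell^\circ - 1$ or $\varepsilon_\ell(d_j - s_\ell^\circ)-1$ after the index substitution, but since $s_\ell^\circ$ is defined to already account for the parity of prior flips, the exponent comes out as stated; alternatively, one can phrase the telescoping in terms of the ``true'' index $s$ and observe that the $d_j-s$ in the mutation rule converts $a_{j,s}$-tracking into $a_{j,d_j-s}$-tracking, and reindexing the final sum by $s\leftrightarrow d_j-s$ is harmless because we prove the identity for all $1\le s\le d_j-1$ simultaneously.

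Then I would invoke Corollary~\ref{cor:a variable periodicity}: proper $\sigma$-periodicity of the groupoid seeds forces $a_{j,s}[M+1]=a_{j,s}[1]$ for every $j$ and every $1\le s\le d_j-1$. Subtracting $a_{j,s}[1]$ from both sides of the telescoped expression leaves exactly the sum in \eqref{eq:a variable periodicity} equal to zero, where I substitute $\beta(y_j[\ell])^{\varepsilon_\ell}$ and $y_j[\ell]^{\varepsilon_\ell}$ for the endpoints using the identification of $\beta$-twisted $y$-variables from the first step. For indices $j$ with $d_j=1$ there are no $z$-variables and hence no $a_{j,s}$ with $1\le s\le d_j-1$, so the statement is vacuous there and we need only the case $d_j>1$.

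The main obstacle I anticipate is purely bookkeeping rather than conceptual: keeping the index substitution $s\mapsto d_j-s$ consistent across a sequence with an even number of direction-$j$ mutations, and confirming that the tropical sign $\varepsilon_\ell$ used at stage $\ell$ in \eqref{eq:a variable periodicity} genuinely matches the sign $\varepsilon_{k_\ell;t_\ell}$ prescribed in the groupoid mutation along $\TT_n$ — this is exactly the convention fixed just before Proposition~\ref{prop:groupoid periodicity}, so it should go through, but it is the step where an off-by-one or a sign error is easiest to make. A secondary point to verify is that the endpoints of integration do not get disturbed by the action-commutation: since Proposition~\ref{prop:compatibility} says $\beta$ commutes with mutation, evaluating the integrand at $\beta(y_k^\varepsilon)$ at stage $\ell$ is the same as evaluating at the $\beta$-twist of the genuine $y$-variable $y_{k_\ell}[\ell]$, which is what appears in \eqref{eq:a variable periodicity}.
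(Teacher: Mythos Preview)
Your proposal is correct and follows exactly the route the paper indicates: the paper states the theorem as an immediate consequence of Corollary~\ref{cor:a variable periodicity} together with the $a$-variable mutation rule, and your telescoping argument supplies precisely those details. One small remark: the appeal to Proposition~\ref{prop:compatibility} is unnecessary here, since the upper endpoint $\beta(y_{k_\ell}[\ell])^{\varepsilon_\ell}$ is already the quantity appearing in the $a$-variable mutation formula at step~$\ell$; compatibility of $\beta$ with mutation only becomes relevant later, in the proof of Theorem~\ref{th:analogues}.
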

      
    We now derive analogous identities which will be more useful for our purposes under the assumption that the exchange matrix \(B\) is full rank.
    The following technical observation is key to the proof.
    \begin{lemma}
      \label{le:careful choice of q}
      Consider a seed $(B,\yy,\zz)$ for which $B$ is full rank.
      Then, for any $\lambda\in\RR_{>0}$, there exists a choice of $\qq$ so that $\sum_i r_i b_{ij} y_i q_i=-\lambda$ for all $1\le j\le n$.
    \end{lemma}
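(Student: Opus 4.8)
The plan is to recognize the $n$ desired equations as a single linear system in the unknowns $q_1,\dots,q_n$ and to solve it using the invertibility of $B$. First I would make the substitution $v_i := y_i q_i$; since every $y_i$ lies in $\RR_{>0}$, a choice of $\qq\in\RR^n$ is equivalent to a choice of the vector $\mathbf{v}=(v_i)_{i=1}^n\in\RR^n$, with $q_i=v_i/y_i$. Writing $R=\diag(r_1,\dots,r_n)$, one checks directly that $(RB)_{ij}=r_i b_{ij}$, so the quantity $\sum_i r_i b_{ij} y_i q_i$ is exactly the $j$-th entry of $(RB)^T\mathbf{v}$. Thus the conditions of the lemma amount to the single matrix equation $(RB)^T\mathbf{v}=-\lambda\,\mathbf{1}$, where $\mathbf{1}$ denotes the all-ones vector in $\RR^n$.

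Next I would invoke the defining property of the skew-symmetrizer, namely $(RB)^T=-RB$, which turns the system into $RB\,\mathbf{v}=\lambda\,\mathbf{1}$. Since $B$ is full rank it is invertible as an $n\times n$ rational matrix, and because $R$ is diagonal with nonzero entries the product $RB$ is invertible as well. Hence the system has the (unique) solution $\mathbf{v}=\lambda\,(RB)^{-1}\mathbf{1}$, and setting $q_i:=v_i/y_i\in\RR$ yields the required $n$-tuple $\qq$.

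I do not expect a genuine obstacle here: the argument is pure linear algebra. The only conceptual point worth stating is that "full rank'' for the square matrix $B$ means invertibility, and that this is precisely what guarantees the linear system $RB\,\mathbf{v}=\lambda\,\mathbf{1}$ is solvable — in fact solvable for an arbitrary right-hand side, although only the constant vector $-\lambda\,\mathbf{1}$ is needed below. This is exactly the hypothesis that fails in the general (non-full-rank) case, where one can only control the image of $RB$ rather than hit every target vector.
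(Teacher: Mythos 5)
Your proposal is correct and follows essentially the same route as the paper: both identify the conditions with the linear system $(RB)^T\diag(\yy)\,\qq=-\lambda\,\mathbf{1}$ and solve it using the invertibility of $RB$ (from the full-rank hypothesis) together with the invertibility of $\diag(\yy)$. The only cosmetic difference is that you substitute $v_i=y_iq_i$ first rather than keeping $\diag(\yy)$ as an explicit factor.
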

    \begin{proof}
      To see this, we collect the expressions into a vector and observe the following factorization:
      \[
        \begin{bmatrix}
          -\lambda \\
          \vdots \\
          -\lambda
        \end{bmatrix}
        =
        \begin{bmatrix}
          \sum_{i} r_i b_{i1} y_{i} q_{i} \\
          \vdots \\
          \sum_{i} r_i b_{in} y_{i} q_{i}
        \end{bmatrix}
        =
        (RB)^T
        \begin{bmatrix}
          y_{1} q_{1} \\
          \vdots \\
          y_{n} q_{n}
        \end{bmatrix}
        =
        -RB\diag(\yy)
        \begin{bmatrix}
          q_{1} \\
          \vdots \\
          q_{n}
        \end{bmatrix}.
      \]
      Since $B$ is full rank, $-RB$ is invertible and, since $y_i\ne 0$ for all $i$, $-RB\diag(\yy)$ is also invertible.
      The result follows.
    \end{proof}

    \begin{theorem}
      \label{th:analogues}
      Given a proper $\sigma$-periodicity
      \[\big(B[1],\yy[1],\zz[1],\qq[1],\aa[1]\big) \stackrel{\mu_{k_1}}{\joinrel\relbar\!\longrightarrow} \cdots \stackrel{\mu_{k_M}}{\joinrel\relbar\!\longrightarrow} \big(B[M+1],\yy[M+1],\zz[M+1],\qq[M+1],\aa[M+1]\big)\]
      of groupoid seeds with $B[1]$ of full rank, for \(1 \leq j \leq n\) and \(1 \leq s \leq d_j-1\), we have
      \begin{equation}
        \label{eq:analogues}
        \sum\limits_{\ell:k_\ell = j} \frac{\varepsilon_{\ell}}{r_j} \int_{0}^{y_{j}[\ell]^{\varepsilon_{\ell}}} \frac{u^{\varepsilon_{\ell} s_\ell^\circ-1}}{P_{\ell}(u^{\varepsilon_{\ell}})} \ \delta u = 0,
      \end{equation}
      where \(s_\ell^\circ = s\) if there are an even number of \(1 \leq \ell' \leq \ell-1\) with \(k_{\ell'} =j\) and \(s_\ell^\circ = d_j - s\) if this number is odd.
    \end{theorem}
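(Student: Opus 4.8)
The plan is to apply Theorem~\ref{th:thm from periodicity} to a one-parameter family of groupoid extensions of the given periodicity and then pass to a limit. Since $B[1]$ is full rank, Lemma~\ref{le:careful choice of q} produces, for each $\lambda>0$, initial $q$-variables $\qq[1]=\qq[1]_\lambda$ with $\sum_i r_i b_{ij}[1]\,y_i[1]\,q_i[1]=-\lambda$ for every $j$, so that the action contracts the initial $y$-variables uniformly: $\beta(y_i[1])=e^{-\lambda}y_i[1]$. For an arbitrary choice of $\aa[1]$, the groupoid seed $(B[1],\yy[1],\zz[1],\qq[1]_\lambda,\aa[1])$ extends the initial seed, and by Proposition~\ref{prop:groupoid periodicity} the resulting groupoid mutation sequence is a proper $\sigma$-periodicity; hence Theorem~\ref{th:thm from periodicity} gives \eqref{eq:a variable periodicity} for every $\lambda>0$. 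The lower limits $y_j[\ell]^{\varepsilon_\ell}$ and the exponents $s_\ell^\circ$ appearing there are determined by the $y$-seed periodicity alone and so are independent of $\lambda$.

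The key step is to show that the upper limits $\beta(y_j[\ell])^{\varepsilon_\ell}$ tend to $0$ as $\lambda\to\infty$. By Proposition~\ref{prop:compatibility} the action commutes with mutation, so $(B[\ell],\beta\yy[\ell],\zz[\ell])$ is obtained from $(B[1],e^{-\lambda}\yy[1],\zz[1])$ by the same mutation sequence. Applying the separation of additions formula \eqref{eq:separation of additions} with this as the initial seed,
\[
\beta(y_j[\ell])=e^{-\lambda\sum_i c_{ij}[\ell]}\ \prod_i y_i[1]^{c_{ij}[\ell]}\ \prod_i F_{i;t_\ell}\!\big(e^{-\lambda}\yy[1],\zz[1]\big)^{b_{ij}[\ell]}.
\]
Since each $F$-polynomial has constant term $1$ as a polynomial in $\yy$ (Theorem~\ref{th:constant term}), the $F$-factors tend to $1$ as $\lambda\to\infty$. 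For the terms with $k_\ell=j$, sign-coherence (Theorem~\ref{th:sign coherence}) forces the $c$-vector $\cc_{k_\ell;t_\ell}$ to be nonzero with all entries of sign $\varepsilon_\ell$, so $\varepsilon_\ell\sum_i c_{ik_\ell}[\ell]\ge 1$; raising the displayed identity to the power $\varepsilon_\ell$ then gives $\beta(y_j[\ell])^{\varepsilon_\ell}\to 0^+$.

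To conclude, I would split each integral in \eqref{eq:a variable periodicity} at $u=0$, writing $\int_{y_j[\ell]^{\varepsilon_\ell}}^{\beta(y_j[\ell])^{\varepsilon_\ell}}=\int_0^{\beta(y_j[\ell])^{\varepsilon_\ell}}-\int_0^{y_j[\ell]^{\varepsilon_\ell}}$. This is legitimate because the $z$-variables lie in $\RR_{\ge 0}$, so $P_\ell$ is at least $1$ on the relevant interval, and after clearing denominators the integrand behaves near $u=0$ like $u^{s_\ell^\circ-1}$ when $\varepsilon_\ell=1$ and like $u^{d_j-s_\ell^\circ-1}$ when $\varepsilon_\ell=-1$, with non-negative exponent since $1\le s_\ell^\circ\le d_j-1$; hence the integrals from $0$ converge and, moreover, $\int_0^{\beta(y_j[\ell])^{\varepsilon_\ell}}\to 0$ as the upper limit shrinks to $0$. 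Letting $\lambda\to\infty$ in \eqref{eq:a variable periodicity} and using the previous paragraph then leaves precisely \eqref{eq:analogues}. The main obstacle is the middle step — controlling $\beta(y_j[\ell])$ for all $\ell$ at once — which is exactly where full rank of $B[1]$ enters, via Lemma~\ref{le:careful choice of q} together with the separation formula and sign-coherence; the convergence check and the passage to the limit are routine once that asymptotic is in hand.
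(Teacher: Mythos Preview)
Your proof is correct and follows essentially the same route as the paper's: rewrite \eqref{eq:a variable periodicity} as a difference of integrals from $0$, use Lemma~\ref{le:careful choice of q} (full rank) to drive the initial $\beta(y_i)$ to $0$, then invoke Proposition~\ref{prop:compatibility}, the separation formula \eqref{eq:separation of additions}, Theorem~\ref{th:constant term}, and sign-coherence to conclude that each $\beta(y_j[\ell])^{\varepsilon_\ell}\to 0$, forcing the remaining sum to vanish. Your version is slightly more explicit---you parametrize by $\lambda$ and verify the integrability at $u=0$ via the asymptotics $u^{s_\ell^\circ-1}$ and $u^{d_j-s_\ell^\circ-1}$---but the argument is the same as the paper's.
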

    \begin{proof}
      The result of Theorem~\ref{th:thm from periodicity} can be rewritten as
      \begin{equation}
        \label{eq:initial}
        \sum\limits_{\ell:k_\ell = j} \frac{\varepsilon_{\ell}}{r_j} \int_{0}^{\beta(y_{j}[\ell])^{\varepsilon_{l}}} \frac{u^{\varepsilon_{\ell} s_\ell^\circ-1}}{P_\ell(u^{\varepsilon_{\ell}})} \ \delta u
        =
        \sum\limits_{\ell:k_\ell = j} \frac{\varepsilon_{\ell}}{r_j} \int_{0}^{y_{j}[\ell]^{\varepsilon_{\ell}}} \frac{u^{\varepsilon_{\ell} s_\ell^\circ-1}}{P_\ell(u^{\varepsilon_{\ell}})} \ \delta u.
      \end{equation}
      While the left hand side of \eqref{eq:initial} apparently depends on the initial \(q\)-variables, the right hand side shows that it does not.

      Since the $\sigma$-periodicity is proper, by Proposition~\ref{prop:groupoid periodicity}, \eqref{eq:a variable periodicity} holds for any choice of $\qq[1]$ and hence any choice of $\qq_{t_0}$.
      By Lemma~\ref{le:careful choice of q}, we may choose $\qq_{t_0}$ so that each $\beta(y_{j;t_0})$ is arbitrarily small.

      By the separation of additions formula \eqref{eq:separation of additions} and Proposition~\ref{prop:compatibility}, we may write 
      \[
        \beta(y_{j;t})^{\varepsilon_{j;t}}
        =
        \prod\limits_{i=1}^n \beta(y_{i;t_0})^{\varepsilon_{j;t} c_{ij;t}} F_{i;t}|_{\RR_{>0}}(\beta(\yy_{t_0}),\zz_{t_0})^{\varepsilon_{j;t} b_{ij;t}}
      \]
      for any \(t \in \TT_n\) and \(1 \leq j \leq n\).
      By Theorem~\ref{th:constant term}, $F_{i;t}(\yy,\zz)$ has constant term $1$ as a polynomial in $\yy$ and thus making each $\beta(y_{j;t_0})$ arbitrarily small also makes $F_{i;t}|_{\RR_{>0}}(\beta(\yy_{t_0}),\zz_{t_0})$ arbitrarily close to $1$.
      Now observe that each exponent $\varepsilon_{j;t} c_{ij;t}$ is non-negative with at least one strictly positive by Theorem~\ref{th:sign coherence}.
      In particular, this implies $\prod\limits_{i=1}^n \beta(y_{i;t_0})^{\varepsilon_{j;t} c_{ij;t}}$ can be made arbitrarily small as the $\beta(y_{i;t_0})$ are all made arbitrarily small.
      
      It follows that the left hand side of \eqref{eq:initial} can be made arbitrarily small and thus the right hand side must be zero.
    \end{proof}

%%%%%%%%%%%%%%%%%%%%%%%%%%%%%%%%
\section{Dilogarithm identities} 
  \label{sec:dilogarithm identities}
  
  \subsection{Dilogarithm definitions}
    The \emph{Euler dilogarithm} is given by
    \[
      \Li_2(x) \coloneq \sum\limits_{n=1}^\infty \frac{x^n}{n^2} = -\int_{0}^{-x} \frac{\log(1+y)}{y} \ \delta y \quad (x < 1)
    \]
    with corresponding \emph{Rogers dilogarithm} given by
    \begin{align*}
      \LL(x) &\coloneq \frac{1}{2} \int_{0}^x \left(\frac{\log\!\big(1+y\big)}{y}-\frac{\log(y)}{1+y} \right) \ \delta y\\
      &= -\Li_2(-x) -\frac{1}{2} \log(x) \log(1+x) \quad (x > 0).
    \end{align*}
    
    We generalize these as follows.
    Let \(P(x)\) be a monic polynomial of degree \(d\) with non-negative coefficients and $P(0)=1$.
    Then following \cite{nakanishi2015quantum}, define the \emph{Euler dilogarithm of higher degree} with respect to \(P(x)\) as
    \[
      \Li_{2,P}(x) \coloneq -\int_{0}^{-x} \frac{\log\!\big(P(y)\big)}{y} \ \delta y \quad (x \leq 0).
    \]
    We are most concerned with the \emph{Rogers dilogarithm of higher degree}, defined in \cite{nakanishi2018rogers} as:
    \begin{align}
      \label{eq:roger definition higher degree}
      \LL_{P}(x) &\coloneq \frac{1}{2} \int_{0}^x \left(\frac{\log\!\big(P(y)\big)}{y}-\frac{\log(y)}{P(y)} P'(y) \right) \ \delta y\\
      \label{eq:roger to euler} &= -\Li_{2,P}(-x) -\frac{1}{2} \log(x) \log\!\big(P(x)\big) \quad (x > 0).
    \end{align}
    These dilogarithms satisfy a remarkable variety of identities.
    One particularly easy identity to verify (see \cite{nakanishi2018rogers}) is the following:
    \begin{equation}
      \label{eq:constancy example}
      \LL_P(x)+\LL_{P^*}(x^{-1}) = \LL_P(\infty) = \LL_{P^*}(\infty),
    \end{equation}
    where \(P^*(x) \coloneq x^d P(x^{-1})\), i.e. \(P(x)\) with reversed coefficients, and \(\LL_P(\infty) \coloneq \lim\limits_{x \to \infty} \LL_P(x)\).
    
    This is not the only identity satisfied by Rogers dilogarithms.
    In fact, as we will see shortly, there are many identities of a very similar form.
    Next we introduce the \emph{constancy condition}, which is a powerful tool for constructing identities of Rogers dilogarithms.
    This will allow us to deduce identities of the dilogarithms of higher degree from the classical dilogarithm identities associated to a periodicity of cluster mutations.

    To begin this process, we make the following important observation.
    \begin{prop}
      \label{prop:zero coefficients}
      Suppose $P(x)=1+x^d$.
      Then
      \[
        \LL_P(x) = \frac{1}{d}\LL(x^d).
      \]
    \end{prop}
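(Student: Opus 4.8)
The plan is a direct computation from the integral definition \eqref{eq:roger definition higher degree}. First I would substitute $P(y)=1+y^d$, so that $P'(y)=d\,y^{d-1}$ and
\[
  \LL_P(x)=\frac12\int_0^x\left(\frac{\log(1+y^d)}{y}-\frac{d\,y^{d-1}\log(y)}{1+y^d}\right)\delta y .
\]
Next I would rewrite the right-hand side $\frac1d\LL(x^d)$ using the definition of the classical Rogers dilogarithm and carry out the change of variables $y=u^d$. Since $x>0$, the map $u\mapsto u^d$ is an increasing bijection of $(0,\infty)$ taking $[0,x]$ onto $[0,x^d]$, with $\delta y=d\,u^{d-1}\,\delta u$, so
\[
  \frac1d\LL(x^d)=\frac1{2d}\int_0^{x^d}\left(\frac{\log(1+y)}{y}-\frac{\log(y)}{1+y}\right)\delta y
  =\frac12\int_0^x\left(\frac{\log(1+u^d)}{u}-\frac{d\,u^{d-1}\log(u)}{1+u^d}\right)\delta u ,
\]
where in the last equality I used $u^{d-1}/u^{d}=1/u$ and $\log(u^{d})=d\log(u)$, and the factor $d$ from $\delta y$ cancels the $1/d$. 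The integrand now matches the one in the first display verbatim, which proves the identity. I would also insert a remark that the integrands have only integrable singularities at the lower limit — near $y=0$ one has $\log(1+y^d)/y=O(y^{d-1})$ and $y^{d-1}\log(y)/(1+y^d)=O(y^{d-1}\log y)$ — so every integral involved converges and the substitution is legitimate.

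There is no genuine obstacle here: the argument is a one-line change of variables, and the only point worth flagging is that the identity is asserted on the domain $x>0$, which is exactly where $\LL_P$ and $\LL$ are given closed forms in \eqref{eq:roger to euler}. As a consistency check I would note that the same substitution, applied instead in $\Li_{2,P}(-x)=-\int_0^x\log(1+y^d)/y\;\delta y$, yields $\Li_{2,P}(-x)=\tfrac1d\Li_2(-x^d)$, whence via \eqref{eq:roger to euler}
\[
  \LL_P(x)=-\Li_{2,P}(-x)-\tfrac12\log(x)\log(1+x^d)=-\tfrac1d\Li_2(-x^d)-\tfrac1{2d}\log(x^d)\log(1+x^d)=\tfrac1d\LL(x^d),
\]
recovering the statement a second way.
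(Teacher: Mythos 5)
Your proposal is correct and is essentially the paper's argument: both proofs reduce the claim to the change of variables $y=u^{d}$ (equivalently $y=u^{1/d}$), and your ``consistency check'' via $\Li_{2,P}(-x)=\tfrac1d\Li_2(-x^d)$ together with \eqref{eq:roger to euler} is word-for-word the computation the paper gives. The only cosmetic difference is that your main line works directly with the integrand of \eqref{eq:roger definition higher degree} rather than with the Euler-dilogarithm form, which changes nothing of substance.
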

    \begin{proof}
      We apply the substitution $y=u^{1/d}$ and $\delta y=\frac{1}{d u^{(d-1)/d}} \delta u$ to get
      \begin{align*}
        \LL_P(x) 
        &= \int_{0}^x \frac{\log\!\big(1+y^d\big)}{y} \ \delta y - \frac{1}{2} \log(x) \log\!\big(P(x)\big) \\
        &= \frac{1}{d}\int_{0}^{x^d} \frac{\log\!\big(1+u\big)}{u} \ \delta u - \frac{1}{2d} \log(x^d) \log\!\big(1+x^d\big)\\
        &=\frac{1}{d} \LL(x^d).
      \end{align*}
    \end{proof}
    
    We will use this by applying \cite[Theorem 2.12]{nakanishi2018rogers} (c.f. \cite[Theorem 2.9]{kashaev2011classical}) in the following way.
    \begin{theorem}
      \label{th:companion dilog}
      Let
      \[\big(B[1],\yy[1],\zz[1]\big) \stackrel{\mu_{k_1}}{\joinrel\relbar\!\longrightarrow} \cdots \stackrel{\mu_{k_M}}{\joinrel\relbar\!\longrightarrow} \big(B[M+1],\yy[M+1],\zz[M+1]\big)\]
      be a proper $\sigma$-periodicity of seeds over $\RR_{>0}$ with corresponding proper periodicity
      \[\big(B[1]D,{}^{\mathcal{R}}\yy[1]\big) \stackrel{\mu_{k_1}}{\joinrel\relbar\!\longrightarrow} \cdots \stackrel{\mu_{k_M}}{\joinrel\relbar\!\longrightarrow} \big(B[M+1]D,{}^{\mathcal{R}}\yy[M+1]\big)\]
      of right companions.
      Then 
      \[\sum\limits_{\ell=1}^M \frac{\varepsilon_\ell}{d_{k_\ell}r_{k_\ell}} \LL\!\left(({}^{\mathcal{R}} y_{k_\ell}[\ell])^{\varepsilon_\ell}\right)=0.\]
    \end{theorem}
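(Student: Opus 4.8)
The plan is to apply the dilogarithm identity \cite[Theorem 2.12]{nakanishi2018rogers} (whose ordinary case is \cite[Theorem 2.9]{kashaev2011classical}) directly to the companion periodicity
\[
  \big(B[1]D,{}^{\mathcal{R}}\yy[1]\big)\to\cdots\to\big(B[M+1]D,{}^{\mathcal{R}}\yy[M+1]\big),
\]
which by hypothesis is a proper $\sigma$-periodicity of \emph{ordinary} seeds over $\RR_{>0}$ (all mutation degrees equal $1$ and no $z$-variables). For an ordinary seed the polynomial attached to each mutation is $P(x)=1+x$, so by Proposition~\ref{prop:zero coefficients} with $d=1$ the higher-degree Rogers dilogarithm reduces to the ordinary one, $\LL_{1+x}=\LL$. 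Thus the identity produced by \cite[Theorem 2.12]{nakanishi2018rogers} already has the shape of the asserted formula, and what remains is to identify the normalization constant and to check that the signs are the claimed ones.

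For the normalization I would record that $B[1]D$ is skew-symmetrizable with skew-symmetrizer $RD=\diag(r_1d_1,\ldots,r_nd_n)$: since $R$ and $D$ are diagonal they commute, and $(RB)^T=-RB$ is equivalent to $B^T=-RBR^{-1}$, whence $\big((RD)(BD)\big)^T=DB^TDR=-DRBR^{-1}DR=-DRBD=-(RD)(BD)$. When all $d_i=1$ the asserted formula reduces to $\sum_\ell\frac{\varepsilon_\ell}{r_{k_\ell}}\LL\big(y_{k_\ell}[\ell]^{\varepsilon_\ell}\big)=0$, i.e. exactly \cite[Theorem 2.9]{kashaev2011classical}, so the weight attached to a mutation in direction $k$ in the classical identity is the reciprocal of the $k$-th diagonal entry of the skew-symmetrizer; for the companion that entry is $(RD)_{k_\ell k_\ell}=r_{k_\ell}d_{k_\ell}$, which is the source of the factor $d_{k_\ell}$.

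For the signs I would check that the tropical signs of the companion periodicity coincide with the $\varepsilon_\ell$. The $C$-matrices of the companion obey the $C$-matrix recursion of Section~\ref{sec:generalized cluster algebras} with $B_tD$ in place of $B_t$, and ordinary mutation of $B_tD$ in direction $k$ equals $B_{t'}D$; an easy induction on the distance from $t_0$ then gives that the companion $C$-matrix at $t$ is $D^{-1}C_tD$, which has the same column sign pattern as $C_t$ since all $d_i>0$. Hence the tropical signs agree with the $\varepsilon_\ell$, and assembling the three ingredients, \cite[Theorem 2.12]{nakanishi2018rogers} applied to the companion periodicity gives $\sum_{\ell=1}^M\frac{\varepsilon_\ell}{r_{k_\ell}d_{k_\ell}}\LL\big(({}^{\mathcal{R}}y_{k_\ell}[\ell])^{\varepsilon_\ell}\big)=0$, which is the claim. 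The main obstacle is not any individual computation — all three checks are short — but verifying that the hypotheses of \cite[Theorem 2.12]{nakanishi2018rogers} are genuinely met by the companion periodicity, in particular that the tropical signs transported to the companion are the original $\varepsilon_\ell$ and that the correct skew-symmetrizer is $RD$; the substantive fact that a proper periodicity lifts to a proper periodicity of right companions is Theorem~\ref{th:companion periodicity}, which we take as given.
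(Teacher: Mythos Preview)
Your proposal is correct and follows essentially the same approach as the paper: apply \cite[Theorem 2.12]{nakanishi2018rogers} to the companion periodicity after noting that $RD=\diag(r_1d_1,\ldots,r_nd_n)$ is the skew-symmetrizer of $BD$. You supply more detail than the paper does---in particular the verification that the companion $C$-matrices are $D^{-1}C_tD$ so that the tropical signs agree with the $\varepsilon_\ell$---but the strategy is identical.
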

    \begin{proof}
      In order to apply \cite[Theorem 2.12]{nakanishi2018rogers} to the proper periodicity of right companions, it suffices to observe that the matrix $\diag(r_1d_1,\ldots,r_nd_n)$ is the common skew-symmetrizer of the exchange matrices $B[\ell]D$ appearing in the periodicity of right companions.
    \end{proof}

  \subsection{Constancy Condition}
    Let \(\mathcal{C}\) be the set of differentiable functions \(y(z):\RR_{>0} \to \RR_{>0}\).
    Regarding \(\mathcal{C}\) as a multiplicative abelian group, let \(\mathcal{C} \otimes \mathcal{C}\) be the tensor product of \(\mathcal{C}\) with itself, i.e.\ the additive abelian group generated by the formal, non-commutative products \(\{y_1 \otimes y_2 \ | \ y_1,y_2 \in \mathcal{C}\}\) with the following relations:
    \[
      (y_1y_2) \otimes y_3 = y_1 \otimes y_3+y_2 \otimes y_3
      \quad \text{and} \quad
      y_1 \otimes (y_2y_3) = y_1 \otimes y_2 + y_1 \otimes y_3.
    \]
    Below we will abuse notation slightly and write $\mathcal{C}\otimes\mathcal{C}$ for the rational vector space $\QQ\otimes_\ZZ(\mathcal{C}\otimes\mathcal{C})$.
    
    Define the \emph{wedge space} of \(\mathcal{C}\) as \(\bigwedge^2\mathcal{C} \coloneq (\mathcal{C} \otimes \mathcal{C}) / S^2 \mathcal{C}\), where \(S^2 \mathcal{C}\) is the subspace generated by elements of the form \(y_1 \otimes y_1 \in \mathcal{C} \otimes \mathcal{C}\).
    For any \(y_1,y_2 \in \mathcal{C}\), we define their \emph{wedge product} to be  \(y_1 \wedge y_2 \coloneq y_1 \otimes y_2 + S^2\mathcal{C} \in \bigwedge^2\mathcal{C}\).
    
    The following result, adapted from \cite{nakanishi2018rogers} will be the key step in establishing dilogarithm identities.
    \begin{lemma}
      \label{le:symmetry from wedge}
      Let \(y_1,\cdots,y_M,p_1,\cdots,p_M \in \mathcal{C}\) be functions such that \(\sum\limits_{\ell=1}^M c_\ell (y_\ell \wedge p_\ell) = 0\) for some $c_\ell\in\QQ$.
      Then \[\sum\limits_{\ell=1}^M c_\ell \frac{\delta}{\delta z}\Big(\log\!\big(y_\ell(z)\big)\Big) \log\!\big(p_\ell(z)\big) = \sum\limits_{\ell=1}^M c_\ell \log\!\big(y_\ell(z)\big) \frac{\delta}{\delta z}\Big(\log\!\big(p_\ell(z)\big)\Big).\]
    \end{lemma}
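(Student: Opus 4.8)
The statement to prove relates a vanishing in $\bigwedge^2\mathcal{C}$ to a symmetry of the associated ``logarithmic derivative'' expression, so the natural approach is to introduce a linear map from $\bigwedge^2\mathcal{C}$ (or from $\mathcal{C}\otimes\mathcal{C}$) to the space of functions on $\RR_{>0}$ that records exactly the difference appearing in the conclusion. First I would define, for $y_1,y_2\in\mathcal{C}$, the pairing
\[
D(y_1\otimes y_2)(z) \coloneq \frac{\delta}{\delta z}\big(\log y_1(z)\big)\,\log y_2(z) - \log y_1(z)\,\frac{\delta}{\delta z}\big(\log y_2(z)\big),
\]
and check that this is well-defined on $\mathcal{C}\otimes\mathcal{C}$: it is bilinear (additive in each slot because $\log$ turns products into sums and $\frac{\delta}{\delta z}$ is linear), and it is alternating because $D(y\otimes y)=\frac{\delta}{\delta z}(\log y)\log y-\log y\,\frac{\delta}{\delta z}(\log y)=0$, so it descends to a well-defined $\QQ$-linear map $D\colon\bigwedge^2\mathcal{C}\to C^\infty(\RR_{>0})$ (extending to the rational vector space by linearity).

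The second step is then immediate: applying $D$ to the hypothesis $\sum_{\ell=1}^M c_\ell (y_\ell\wedge p_\ell)=0$ gives $\sum_{\ell=1}^M c_\ell D(y_\ell\otimes p_\ell)=0$, which is exactly
\[
\sum_{\ell=1}^M c_\ell\,\frac{\delta}{\delta z}\big(\log y_\ell(z)\big)\log p_\ell(z) - \sum_{\ell=1}^M c_\ell\,\log y_\ell(z)\,\frac{\delta}{\delta z}\big(\log p_\ell(z)\big) = 0,
\]
and rearranging yields the claimed identity. So the proof reduces entirely to verifying that $D$ is well-defined, i.e. that it respects the defining relations of $\mathcal{C}\otimes\mathcal{C}$ and kills $S^2\mathcal{C}$.

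**Main obstacle.** The only genuine content is the well-definedness check, and within that the subtle point is bilinearity rather than the alternating property. One must confirm that $D\big((y_1y_2)\otimes y_3\big) = D(y_1\otimes y_3) + D(y_2\otimes y_3)$ as functions on $\RR_{>0}$; this follows because $\log(y_1y_2) = \log y_1 + \log y_2$ pointwise and both $\frac{\delta}{\delta z}$ and multiplication by a fixed function are additive, and symmetrically in the second slot. I would also note explicitly that all the functions involved take values in $\RR_{>0}$, so every $\log y_\ell(z)$ and $\log p_\ell(z)$ is a genuine differentiable real-valued function and there are no branch-of-logarithm ambiguities — this is precisely why $\mathcal{C}$ was defined as functions into $\RR_{>0}$. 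One small bookkeeping point: the relations in $\mathcal{C}\otimes\mathcal{C}$ are stated over $\ZZ$ while the conclusion allows $c_\ell\in\QQ$, so I would remark that $D$ extends uniquely to the rational vector space $\QQ\otimes_\ZZ(\mathcal{C}\otimes\mathcal{C})$, consistent with the abuse of notation fixed earlier in the text. With these observations in place, the lemma is a one-line consequence of applying the linear functional $D$ to the hypothesis.
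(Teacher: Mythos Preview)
Your argument is correct. The core idea---that $\log$ converts the multiplicative relations defining $\mathcal{C}\otimes\mathcal{C}$ into ordinary additivity, so that a suitable bilinear pairing is well-defined---is the same as the paper's, but the packaging differs. You build the alternating form $D$ in one variable directly and observe that it kills $S^2\mathcal{C}$, so the hypothesis maps to zero immediately. The paper instead introduces a two-variable pairing $\Phi_{z,w}(f\otimes g)=\log f(z)\log g(w)$, writes $\sum_\ell c_\ell(y_\ell\otimes p_\ell)=\sum_i h_i\otimes h_i$ in $S^2\mathcal{C}$, notes that $\Phi_{z,w}$ and $\Phi_{w,z}$ agree on such symmetric tensors, and then differentiates the resulting two-variable identity in $z$ at $w=z$. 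Your route is slightly more direct (one map, no auxiliary variable, no differentiation step), while the paper's detour yields the marginally stronger intermediate statement $\sum_\ell c_\ell\log y_\ell(z)\log p_\ell(w)=\sum_\ell c_\ell\log y_\ell(w)\log p_\ell(z)$ before specializing.
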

    \begin{proof}
      Since \(\sum\limits_{\ell=1}^M c_\ell (y_\ell \wedge p_\ell) = 0\) in \(\bigwedge^2\mathcal{C}\), we may rewrite it as
      \begin{equation}
        \label{eq:symmetric}
        \sum\limits_{\ell=1}^M c_\ell (y_\ell \otimes p_\ell) = \sum\limits_{i=1}^k h_i \otimes h_i,
      \end{equation}
      for some functions \(h_1,\cdots,h_k \in \mathcal{C}\).
      Next, we observe that for each \(z,w \in \RR_{>0}\), there exists a unique additive group homomorphism 
      \[\Phi_{z,w}: \mathcal{C}\otimes \mathcal{C} \to \RR,\qquad f \otimes g \mapsto \log\!\big(f(z)\big) \log\!\big(g(w)\big).\]
      Applying $\Phi_{z,w}$ and $\Phi_{w,z}$ to the right hand side of \eqref{eq:symmetric} yields the same result and thus
      \[\sum\limits_{\ell=1}^M c_\ell \log\!\big(y_\ell(z)\big) \log\!\big(p_\ell(w)\big) = \sum\limits_{\ell=1}^M c_\ell \log\!\big(y_\ell(w)\big) \log\!\big(p_\ell(z)\big).\]
      It follows that
      \[\left.\frac{\delta }{\delta z}\right|_{w=z} \sum\limits_{\ell=1}^M c_\ell \log\!\big(y_\ell(z)\big)\log\!\big(p_\ell(w)\big) = \left. \frac{\delta }{\delta z}\right|_{w=z}\sum\limits_{\ell=1}^M c_\ell \log\!\big(y_\ell(w)\big)\log\!\big(p_\ell(z)\big)\]
      which gives the claim.
    \end{proof}
    The following is an adaptation of \cite[Theorem 3.2]{nakanishi2018rogers}.
    \begin{prop}
      \label{prop:constancy condition}
      Let \(P_\ell(x)\) be a sequence of monic polynomials with $P_\ell(0)=1$.
      Assume the coefficient of $x^{s_\ell}$ in $P_\ell$ is a function $p_\ell$ of the variable $z$ with all other coefficients non-negative constants.
      Suppose \(y_1,...,y_M \in \mathcal{C}\) are functions satisfying
      \[\sum\limits_{\ell=1}^M c_\ell \Big(y_\ell(z) \wedge P_\ell\big(y_\ell(z)\big)\Big) = 0\]
      inside \(\bigwedge^2\mathcal{C}\) 
      for some $c_\ell\in\QQ$.
      Then 
      \[\frac{\delta}{\delta z}\sum\limits_{\ell=1}^M c_\ell \LL_{P_\ell}\big(y_\ell(z)\big)=\sum\limits_{\ell=1}^M c_\ell \frac{\delta p_\ell}{\delta z} \int_{0}^{y_\ell(z)} \frac{y^{s_\ell-1}}{P_\ell(y)} \ \delta y.\]
    \end{prop}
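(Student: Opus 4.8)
The plan is to differentiate the sum $\sum_\ell c_\ell \LL_{P_\ell}(y_\ell(z))$ directly using the integral definition \eqref{eq:roger definition higher degree} of $\LL_{P_\ell}$, keeping in mind that now both the upper limit $y_\ell(z)$ and the integrand (through the coefficient $p_\ell$ of $x^{s_\ell}$) depend on $z$. Writing $\LL_{P_\ell}(x) = \frac12\int_0^x\big(\tfrac{\log P_\ell(y)}{y} - \tfrac{\log y}{P_\ell(y)}P_\ell'(y)\big)\,\delta y$, the derivative with respect to $z$ splits into two pieces: the boundary term from the moving endpoint $y_\ell(z)$, and the term from differentiating under the integral sign with respect to the parameter $z$ that enters only via $p_\ell$.

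First I would handle the boundary term. By the fundamental theorem of calculus, differentiating $\LL_{P_\ell}(x)$ in $x$ and multiplying by $y_\ell'(z)$ gives $\tfrac12\big(\tfrac{\log P_\ell(y_\ell)}{y_\ell} - \tfrac{\log y_\ell}{P_\ell(y_\ell)}P_\ell'(y_\ell)\big)y_\ell'(z)$. Rewriting $y_\ell'(z)/y_\ell(z) = \tfrac{\delta}{\delta z}\log y_\ell(z)$, this boundary contribution becomes $\tfrac12\big(\tfrac{\delta}{\delta z}\log y_\ell\big)\log P_\ell(y_\ell) - \tfrac12 \log y_\ell \cdot \tfrac{\delta}{\delta z}\log P_\ell(y_\ell)$, where I have used the chain rule to identify $\tfrac{P_\ell'(y_\ell)y_\ell'}{P_\ell(y_\ell)}$ as part of $\tfrac{\delta}{\delta z}\log P_\ell(y_\ell)$ — but care is needed here because $\tfrac{\delta}{\delta z}\log P_\ell(y_\ell(z))$ also picks up the explicit $z$-dependence through $p_\ell$; I would write $\tfrac{\delta}{\delta z}\log P_\ell(y_\ell(z)) = \tfrac{P_\ell'(y_\ell)y_\ell'}{P_\ell(y_\ell)} + \tfrac{\dot p_\ell\, y_\ell^{s_\ell}}{P_\ell(y_\ell)}$ and track the extra term. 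Summing over $\ell$ with coefficients $c_\ell$, the hypothesis $\sum_\ell c_\ell(y_\ell\wedge P_\ell(y_\ell)) = 0$ together with Lemma~\ref{le:symmetry from wedge} (applied with $p_\ell$ there equal to $P_\ell(y_\ell(z))$ here) tells me exactly that $\sum_\ell c_\ell\big(\tfrac{\delta}{\delta z}\log y_\ell\big)\log P_\ell(y_\ell) = \sum_\ell c_\ell \log y_\ell\cdot\tfrac{\delta}{\delta z}\log P_\ell(y_\ell)$, so the two halves of the boundary term cancel after summation — except for whatever discrepancy the extra $\tfrac{\dot p_\ell y_\ell^{s_\ell}}{P_\ell(y_\ell)}$ terms introduce, which I must carry forward.

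Next I would handle the parameter term: differentiating $\int_0^{y_\ell} \big(\tfrac{\log P_\ell(y)}{y} - \tfrac{\log y}{P_\ell(y)}P_\ell'(y)\big)\,\delta y$ under the integral sign with respect to $z$, noting that only the coefficient $p_\ell$ of $x^{s_\ell}$ depends on $z$, so $\tfrac{\partial}{\partial z}P_\ell(y) = \dot p_\ell\, y^{s_\ell}$ and $\tfrac{\partial}{\partial z}P_\ell'(y) = \dot p_\ell\, s_\ell y^{s_\ell-1}$. This produces $\tfrac{\dot p_\ell}{2}\int_0^{y_\ell}\big(\tfrac{y^{s_\ell}}{y P_\ell(y)} - \log y\cdot\tfrac{\partial}{\partial z}\big(\tfrac{P_\ell'(y)}{P_\ell(y)}\big)\big)\,\delta y$; the second summand, involving $\tfrac{s_\ell y^{s_\ell-1}}{P_\ell(y)} - \tfrac{P_\ell'(y)y^{s_\ell}}{P_\ell(y)^2}$, should be recognizable as $\tfrac{\partial}{\partial y}\big(\tfrac{y^{s_\ell}}{P_\ell(y)}\big)$, so that $-\log y\cdot\tfrac{\partial}{\partial y}\big(\tfrac{y^{s_\ell}}{P_\ell(y)}\big)$ can be integrated by parts against $\log y$, producing a boundary term $-\log y_\ell\cdot\tfrac{y_\ell^{s_\ell}}{P_\ell(y_\ell)}$ plus $\int_0^{y_\ell}\tfrac{y^{s_\ell-1}}{P_\ell(y)}\,\delta y$. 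The plan is that this integrated-by-parts boundary term is precisely what is needed to cancel the leftover $-\tfrac12 c_\ell \log y_\ell\cdot\tfrac{\dot p_\ell y_\ell^{s_\ell}}{P_\ell(y_\ell)}$ discrepancies from the first step (the factors of $\tfrac12$ working out because the parameter term also carries a $\tfrac12$, and the $y^{s_\ell}/(yP_\ell)$ first summand contributes a matching copy of $\int_0^{y_\ell}\tfrac{y^{s_\ell-1}}{P_\ell}\,\delta y$), leaving exactly $\sum_\ell c_\ell\dot p_\ell\int_0^{y_\ell}\tfrac{y^{s_\ell-1}}{P_\ell(y)}\,\delta y$.

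The main obstacle I anticipate is bookkeeping: correctly collecting the several $\tfrac12$'s and sign factors from the boundary term, the parameter term, and the integration by parts, and verifying that the pieces not killed by the wedge hypothesis reassemble into the single clean integral on the right-hand side. A secondary technical point is justifying differentiation under the integral sign and the integration by parts near $y=0$ — here the hypotheses $P_\ell(0)=1$ and $s_\ell\ge 1$ ensure $y^{s_\ell-1}/P_\ell(y)$ is bounded and $\log y\cdot y^{s_\ell}/P_\ell(y)\to 0$ as $y\to 0^+$, so no boundary contribution arises at the lower limit. Once the algebra is organized, the wedge relation via Lemma~\ref{le:symmetry from wedge} does all the conceptual work and the rest is verification.
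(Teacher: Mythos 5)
Your argument is correct and is essentially the paper's proof: the surviving term is the $\frac{\delta p_\ell}{\delta z}$ integral, and the residual antisymmetric sum is exactly what Lemma~\ref{le:symmetry from wedge} kills when applied to the hypothesis $\sum_\ell c_\ell\,\big(y_\ell\wedge P_\ell(y_\ell)\big)=0$. The only difference is that you start from the symmetric form \eqref{eq:roger definition higher degree} and therefore need an integration by parts to reassemble the pieces, whereas the paper differentiates \eqref{eq:roger to euler} and reads off the same decomposition immediately; your bookkeeping (the factors of $\tfrac12$, the cancellation of the $\log y_\ell\cdot \frac{\delta p_\ell}{\delta z}\, y_\ell^{s_\ell}/P_\ell(y_\ell)$ terms against the integrated-by-parts boundary term, and the vanishing contribution at $y=0$) all checks out.
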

    \begin{proof}
      Using the expression \eqref{eq:roger to euler} of the Rogers dilogarithm we get
      \begin{align*}
        \frac{\delta}{\delta z}\sum\limits_{\ell=1}^M c_\ell \LL_{P_\ell}\big(y_\ell(z)\big)
        &=\sum\limits_{\ell=1}^M c_\ell \frac{\delta p_\ell}{\delta z} \int_{0}^{y_\ell(z)} \frac{y^{s_\ell-1}}{P_\ell(y)} \ \delta y\\
        &\qquad+\frac{1}{2}\sum\limits_{\ell=1}^M c_\ell \left(\frac{\delta}{\delta  z}\Big(\log\!\big(y_\ell(z)\big)\Big) \log\!\big(P_\ell(y_\ell(z))\big)\right.\\
        &\hspace{1in} \left.-\log\!\big(y_\ell(z)\big) \frac{\delta}{\delta z}\Big(\log\!\big(P_\ell(y_\ell(z))\big)\Big)\right).
      \end{align*}
      The final sum is zero by Lemma~\ref{le:symmetry from wedge} and the result follows.
    \end{proof}
    
  \subsection{Dilogarithm identities from $\sigma$-periodicities}
    It turns out that every proper periodicity of a generalized cluster algebra mutations over $\RR_{>0}$ gives rise to a dilogarithm identity.
    The key result is the following constancy condition established by Nakanishi.
    \begin{theorem}
      \label{th:cluster wedge}
      \cite[Theorem 4.4]{nakanishi2018rogers}
      Let 
      \[\big(B[1],\yy[1],\zz[1]\big) \stackrel{\mu_{k_1}}{\joinrel\relbar\!\longrightarrow} \cdots \stackrel{\mu_{k_M}}{\joinrel\relbar\!\longrightarrow} \big(B[M+1],\yy[M+1],\zz[M+1]\big)\]
      be a proper \(\sigma\)-periodicity of seeds.
      Then
      \[\sum\limits_{\ell=1}^M \frac{1}{r_{k_\ell}} \Big( y_{k_\ell}[\ell] \wedge P_\ell\big(y_{k_\ell}[\ell]\big) \Big) = 0.\]
    \end{theorem}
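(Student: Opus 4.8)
The plan is to obtain this from \cite[Theorem 4.4]{nakanishi2018rogers}, which proves exactly this wedge identity, only with the wedge taken in $\bigwedge^2$ of the relevant semifield's multiplicative group after tensoring with $\QQ$. To transport it into our space $\bigwedge^2\mathcal{C}$, I would first recall that a \emph{proper} $\sigma$-periodicity of seeds over $\mathcal{C}$ is, by definition, the specialization along a semifield homomorphism $\varphi\colon\QQ_+(\yy,\zz)\to\mathcal{C}$ (allowing $\varphi$ to send some of the $z$-variables to $0$) of a proper $\sigma$-periodicity over the universal semifield $\QQ_+(\yy,\zz)$ with formal initial $y$- and $z$-variables; after relabeling $\TT_n$ we may assume this periodicity starts at the root $t_0$. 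The map $\varphi$ is in particular a homomorphism of the underlying multiplicative groups, hence induces a $\QQ$-linear map from $\bigwedge^2$ of that group to $\bigwedge^2\mathcal{C}$ carrying each $y_{k_\ell}[\ell]\wedge P_\ell(y_{k_\ell}[\ell])$ to the corresponding element of $\bigwedge^2\mathcal{C}$. Applying this map to Nakanishi's identity yields the claim.

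For completeness, I would sketch the argument in the universal setting. By the separation of additions formula \eqref{eq:separation of additions}, each $y_{k_\ell}[\ell]$ is a product of powers of the initial $y$-variables $y_{i;t_0}$ and of the $F$-polynomials $F_{i;t_\ell}$, while the defining recursion for the $F$-polynomials, applied along the edge $t_\ell\to t_{\ell+1}$, rewrites $P_\ell(y_{k_\ell}[\ell])$ as $F_{k_\ell;t_{\ell+1}}F_{k_\ell;t_\ell}$ divided by an explicit monomial in the $y_{i;t_0}$ and the $F_{i;t_\ell}$ (Proposition~\ref{prop:z periodicity} ensures the unavoidable reversals of the $z$-indices along the periodicity are consistent). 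Substituting both, each summand $\frac{1}{r_{k_\ell}}\big(y_{k_\ell}[\ell]\wedge P_\ell(y_{k_\ell}[\ell])\big)$ expands as a $\QQ$-linear combination of wedges of the generators $y_{i;t_0}$, $F_{i;t_\ell}$, and $F_{i;t_{\ell+1}}$.

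I would then sum over $\ell$ and split the total into a symmetric part and a telescoping part. The symmetric part lies in $S^2\mathcal{C}$ and so vanishes in $\bigwedge^2\mathcal{C}$; isolating it uses the skew-symmetrizability relation $r_ib_{ij}=-r_jb_{ji}$ together with the $B$- and $C$-matrix mutation rules, exactly as in the classical cluster setting. For the remainder, since only the $F$-polynomial in direction $k_\ell$ is altered at step $t_\ell\to t_{\ell+1}$, the non-symmetric contribution of that step is the difference of a single quantity evaluated at $t_{\ell+1}$ and at $t_\ell$, so the sum over the periodicity collapses to boundary terms. These cancel because the periodicity is $\sigma$-periodic and $F_{i;t_0}=1$, so that wedging against an initial $F$-polynomial vanishes; the identification of the final seed data with the initial data is consistent on the $z$-dependent pieces since $d_{\sigma(i)}=d_i$ and $\sigma(i)=i$ whenever $d_i>1$ by Proposition~\ref{prop:z periodicity}.

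The hard part is the bookkeeping in this last step: verifying that the non-symmetric residue is genuinely a telescoping difference and that its boundary contributions cancel requires tracking the signs of the entries of the $B$- and $C$-matrices through the mutation rules with care, and this is where essentially all of the work of \cite[Theorem 4.4]{nakanishi2018rogers} resides. In a fully self-contained account one would reproduce that computation; otherwise, as above, one invokes it and need only justify the passage between the two wedge spaces.
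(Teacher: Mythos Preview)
The paper does not prove this theorem at all: it is quoted verbatim as \cite[Theorem~4.4]{nakanishi2018rogers} and used as a black box in the derivation of Lemma~\ref{le:constancy} and Theorem~\ref{th:dilog constancy w epsilon}. So there is no ``paper's own proof'' to compare against; the only thing the paper implicitly requires is that Nakanishi's identity, stated over the multiplicative group of a semifield, transports to the space $\bigwedge^2\mathcal{C}$ used here. Your first paragraph addresses exactly that point and is the correct justification.

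Your sketch of the underlying argument (separation of additions to expand $y_{k_\ell}[\ell]$, the $F$-polynomial recursion to rewrite $P_\ell(y_{k_\ell}[\ell])$, then a symmetric-plus-telescoping decomposition with the skew-symmetrizer identity $r_ib_{ij}=-r_jb_{ji}$ controlling the symmetric part and $F_{i;t_0}=1$ together with $\sigma$-periodicity killing the boundary terms) is indeed Nakanishi's strategy, and you are right that the delicate step is the sign bookkeeping in the telescoping residue. One small caution: your phrase ``allowing $\varphi$ to send some of the $z$-variables to $0$'' needs care, since $0\notin\mathcal{C}$ and semifield homomorphisms are maps of multiplicative groups; the cleanest fix is to work in $\QQ_+(\yy)$ with the $z$-variables already specialized to their chosen values in $\mathcal{C}\cup\{0\}$ (equivalently, treat the $z$'s as parameters rather than generators), so that the induced map on wedge spaces is genuinely a group homomorphism on the relevant multiplicative group.
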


    Nakanishi utilized Theorem~\ref{th:cluster wedge} to generate identities of the Rogers dilogarithms of higher degree as in Theorem~\ref{th:dilog constancy w epsilon} below.
    We will utilize Theorem~\ref{th:cluster wedge} to provide an alternative proof by reducing to the dilogarithm identities of a proper $\sigma$-periodicity of standard cluster seeds.
    To begin, we establish a modified version of the constancy condition which will be more useful for us.
    \begin{lemma}
      \label{le:constancy}
      Let 
      \[\big(B[1],\yy[1],\zz[1]\big) \stackrel{\mu_{k_1}}{\joinrel\relbar\!\longrightarrow} \cdots \stackrel{\mu_{k_M}}{\joinrel\relbar\!\longrightarrow} \big(B[M+1],\yy[M+1],\zz[M+1]\big)\]
      be a proper \(\sigma\)-periodicity of seeds.
      We have the following constancy condition:
      \[\sum\limits_{\ell=1}^M \frac{\varepsilon_\ell}{r_{k_\ell}} \Big(y_{k_\ell}[\ell]^{\varepsilon_\ell} \wedge P^\circ_\ell \big(y_{k_\ell}[\ell]^{\varepsilon_\ell}\big)\Big) = 0,\]
      where \(P_\ell^\circ(\alpha) \coloneq P_\ell(\alpha^{\varepsilon_\ell}) \alpha^{(\frac{1-\varepsilon_\ell}{2})d_{k_\ell}}\).
    \end{lemma}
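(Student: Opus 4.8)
The goal is to pass from the constancy condition of Theorem~\ref{th:cluster wedge}, which is phrased in terms of the ``un-tropicalized'' packages $y_{k_\ell}[\ell] \wedge P_\ell(y_{k_\ell}[\ell])$, to the ``tropically signed'' version involving $y_{k_\ell}[\ell]^{\varepsilon_\ell} \wedge P_\ell^\circ(y_{k_\ell}[\ell]^{\varepsilon_\ell})$. The plan is to work term-by-term: for each $\ell$ with $\varepsilon_\ell = +1$ the two summands literally coincide, since $P_\ell^\circ(\alpha) = P_\ell(\alpha)$ in that case, so there is nothing to do. For each $\ell$ with $\varepsilon_\ell = -1$, I would compare the single wedge $y \wedge P_\ell(y)$ (where I abbreviate $y := y_{k_\ell}[\ell]$) against $-\big(y^{-1} \wedge P_\ell^\circ(y^{-1})\big)$ and show these are equal in $\bigwedge^2\mathcal{C}$; summing over $\ell$ then converts Theorem~\ref{th:cluster wedge} into the desired identity.

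**Key computation.** In the case $\varepsilon_\ell = -1$ we have $P_\ell^\circ(\alpha) = P_\ell(\alpha^{-1})\,\alpha^{d_{k_\ell}}$, so $P_\ell^\circ(y^{-1}) = P_\ell(y)\, y^{-d_{k_\ell}}$; here I am using that $P_\ell$ is a degree-$d_{k_\ell}$ monic polynomial with $P_\ell(0)=1$, so $y^{d_{k_\ell}} P_\ell(y^{-1})$ is exactly $P_\ell$ with reversed coefficients evaluated suitably — more simply, $P_\ell^\circ(y^{-1})$ is literally $P_\ell(y)$ times the monomial $y^{-d_{k_\ell}}$. Then, using bilinearity of $\otimes$ and hence of $\wedge$,
\[
  y^{-1} \wedge P_\ell^\circ(y^{-1})
  = y^{-1} \wedge \big(P_\ell(y)\, y^{-d_{k_\ell}}\big)
  = y^{-1} \wedge P_\ell(y) \;-\; d_{k_\ell}\,\big(y^{-1} \wedge y\big).
\]
The first term equals $-\big(y \wedge P_\ell(y)\big)$ by antisymmetry of $\wedge$ in the first slot (which follows from the $S^2\mathcal{C}$ relation in the usual way: $y\wedge P + (-y)\wedge P = (y\cdot y^{-1})\wedge P \cdots$ — I would spell this out as: $0 = (y y^{-1})\wedge P_\ell(y) = y\wedge P_\ell(y) + y^{-1}\wedge P_\ell(y)$ using that $y y^{-1}=1$ and $1\wedge P_\ell(y)=0$). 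The second term $y^{-1}\wedge y$ is antisymmetric, hence $2(y^{-1}\wedge y)=0$ in the rational vector space $\bigwedge^2\mathcal{C}$, so it vanishes. Therefore $y^{-1} \wedge P_\ell^\circ(y^{-1}) = -\big(y\wedge P_\ell(y)\big)$, i.e. $\frac{\varepsilon_\ell}{r_{k_\ell}}\big(y^{\varepsilon_\ell}\wedge P_\ell^\circ(y^{\varepsilon_\ell})\big) = \frac{1}{r_{k_\ell}}\big(y\wedge P_\ell(y)\big)$, which is exactly what is needed so that the two sums agree term by term.

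**Conclusion and main obstacle.** Summing the term-by-term identities over $\ell = 1,\dots,M$ and invoking Theorem~\ref{th:cluster wedge} immediately yields $\sum_{\ell=1}^M \frac{\varepsilon_\ell}{r_{k_\ell}}\big(y_{k_\ell}[\ell]^{\varepsilon_\ell} \wedge P_\ell^\circ(y_{k_\ell}[\ell]^{\varepsilon_\ell})\big) = 0$, which is the claim. There is no serious obstacle here; the only point requiring care is the bookkeeping with the wedge-space relations — in particular remembering that we have passed to the rational vector space $\QQ\otimes_\ZZ(\mathcal{C}\otimes\mathcal{C})$, so antisymmetry ($u\wedge v = -v\wedge u$) is available and monomial factors like $y^{-d_{k_\ell}}$ contribute terms proportional to $y^{-1}\wedge y$ which then drop out. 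I would also note that all the $y_{k_\ell}[\ell]$ are genuine elements of $\mathcal{C}$, i.e. positive differentiable functions of the deformation parameter, which is implicit in the setup of Theorem~\ref{th:cluster wedge}; this is what makes the wedge manipulations legitimate.
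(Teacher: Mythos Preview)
Your proof is correct and follows essentially the same route as the paper's: both reduce the $\varepsilon_\ell$-version term by term to the original wedge identity of Theorem~\ref{th:cluster wedge} by expanding $P_\ell^\circ(y_{k_\ell}[\ell]^{\varepsilon_\ell}) = P_\ell(y_{k_\ell}[\ell])\, y_{k_\ell}[\ell]^{(\varepsilon_\ell-1)d_{k_\ell}/2}$, using bilinearity to pull the $\varepsilon_\ell$ out of the first slot, and killing the leftover $y^{\varepsilon_\ell}\wedge y^{\text{power}}$ term. The only difference is cosmetic: the paper handles both signs in a single chain of equalities, while you split into the cases $\varepsilon_\ell=\pm1$ (and your argument that $y^{-1}\wedge y=0$ is slightly roundabout---it follows more directly from $y^{-1}\wedge y = -(y\wedge y)=0$ by bilinearity alone).
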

    \begin{proof}
      This follows by a direct calculation from Theorem~\ref{th:cluster wedge}:
      \begin{align*}
        \sum\limits_{\ell=1}^M \frac{\varepsilon_\ell}{r_{k_\ell}} \Big(y_{k_\ell}[\ell]^{\varepsilon_\ell} \wedge P^\circ_\ell \big(y_{k_\ell}[\ell]^{\varepsilon_\ell}\big)\Big)
        &=\sum\limits_{\ell=1}^M \frac{\varepsilon_\ell}{r_{k_\ell}} \Big(y_{k_\ell}[\ell]^{\varepsilon_\ell} \wedge P_{\ell} \big(y_{k_\ell}[\ell]\big) y_{k_\ell}[\ell]^{(\frac{\varepsilon_\ell-1}{2})d_{k_\ell}}\Big)\\
        &=\sum\limits_{\ell=1}^M \frac{\varepsilon_\ell}{r_{k_\ell}} \Big(y_{k_\ell}[\ell]^{\varepsilon_\ell} \wedge P_\ell \big(y_{k_\ell}[\ell]\big)\Big)+ \frac{\varepsilon_\ell}{r_{k_\ell}} \Big(y_{k_\ell}[\ell]^{\varepsilon_\ell} \wedge y_{k_\ell}[\ell]^{(\frac{\varepsilon_\ell-1}{2})d_{k_\ell}}\Big)\\
        &=\sum\limits_{\ell=1}^M \frac{\varepsilon_\ell^2}{r_{k_\ell}} \Big(y_{k_\ell}[\ell] \wedge P_{\ell} \big(y_{k_\ell}[\ell]\big)\Big) = \sum\limits_{\ell=1}^M \frac{1}{r_{k_\ell}} \Big(y_{k_\ell}[\ell] \wedge P_{\ell} \big(y_{k_\ell}[\ell]\big)\Big) = 0
      \end{align*}
      as desired.
    \end{proof}
    \begin{theorem}
      \label{th:dilog constancy w epsilon}
      \cite[Theorem 4.8]{nakanishi2018rogers}
      Let 
      \[\big(B[1],\yy[1],\zz[1]\big) \stackrel{\mu_{k_1}}{\joinrel\relbar\!\longrightarrow} \cdots \stackrel{\mu_{k_M}}{\joinrel\relbar\!\longrightarrow} \big(B[M+1],\yy[M+1],\zz[M+1]\big)\]
      be a proper \(\sigma\)-periodicity of seeds with $B[1]$ of full rank.
      Then the following identity holds:
      \begin{equation}
        \label{eq:dilog constancy w epsilon}
        \sum\limits_{\ell=1}^M \frac{\varepsilon_\ell}{r_{k_\ell}} \LL_{P^\circ_\ell}\big((y_{k_\ell}[\ell])^{\varepsilon_\ell}\big)=0,
    \end{equation}
    where \(P_\ell^\circ(\alpha) \coloneq P_\ell(\alpha^{\varepsilon_\ell}) \alpha^{(\frac{1-\varepsilon_\ell}{2})d_{k_\ell}}\).
    \end{theorem}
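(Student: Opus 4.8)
The strategy is to show that the quantity
\[
\Phi:=\sum_{\ell=1}^M \frac{\varepsilon_\ell}{r_{k_\ell}}\LL_{P^\circ_\ell}\!\big((y_{k_\ell}[\ell])^{\varepsilon_\ell}\big)
\]
does not depend on the initial $z$-variables $\zz[1]$, and then to evaluate it at $\zz[1]=0$, where it collapses to a classical dilogarithm identity for the companion periodicity. For the independence, fix $\yy[1]\in\RR_{>0}^n$ and vary a single $z_{j,s}[1]$ (necessarily $d_j>1$) as a differentiable positive function $p(z)$ of a real parameter, holding every other initial $z$-variable constant. By the separation of additions \eqref{eq:separation of additions} together with the fact that each $F$-polynomial has non-negative integer coefficients with constant term $1$ as a polynomial in $\yy$ (Theorem~\ref{th:constant term}), every $y_{k_\ell}[\ell]$ becomes a well-defined element of $\mathcal{C}$. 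Tracing the order-reversal $z_{k,s}\mapsto z_{k,d_k-s}$ through the $z$-mutations, exactly one coefficient of $P_\ell$ depends on $z$ — the coefficient of degree $s_\ell^\circ$, equal to $p(z)$ — and only when $k_\ell=j$; the same holds for $P^\circ_\ell$, the varying degree becoming $d_j-s_\ell^\circ$ when $\varepsilon_\ell=-1$, because $P^\circ_\ell(\alpha)=\alpha^{d_j}P_\ell(\alpha^{-1})$ in that case.

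Now feed the wedge identity of Lemma~\ref{le:constancy} into Proposition~\ref{prop:constancy condition}, with $P_\ell\rightsquigarrow P^\circ_\ell$, $y_\ell\rightsquigarrow(y_{k_\ell}[\ell])^{\varepsilon_\ell}$, and $c_\ell\rightsquigarrow\varepsilon_\ell/r_{k_\ell}$. Since the only coefficients depending on $z$ are the ones coming from $z_{j,s}[1]$, the conclusion collapses to
\[
\frac{\delta}{\delta z}\Phi=p'(z)\sum_{\ell:\,k_\ell=j}\frac{\varepsilon_\ell}{r_j}\int_{0}^{(y_j[\ell])^{\varepsilon_\ell}}\frac{u^{\hat s_\ell-1}}{P^\circ_\ell(u)}\,\delta u,
\]
where $\hat s_\ell=s_\ell^\circ$ if $\varepsilon_\ell=1$ and $\hat s_\ell=d_j-s_\ell^\circ$ if $\varepsilon_\ell=-1$. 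Substituting $P^\circ_\ell(u)=u^{d_j}P_\ell(u^{-1})$ in the terms with $\varepsilon_\ell=-1$ turns each integral into $\int_{0}^{(y_j[\ell])^{\varepsilon_\ell}}u^{\varepsilon_\ell s_\ell^\circ-1}/P_\ell(u^{\varepsilon_\ell})\,\delta u$, so the sum on the right is precisely the left-hand side of \eqref{eq:analogues}. Because $B[1]$ is full rank, Theorem~\ref{th:analogues} (applied after extending the $y$-seed periodicity arbitrarily to a periodicity of groupoid seeds) makes that sum vanish, so $\frac{\delta}{\delta z}\Phi=0$. As $j$ and $s$ were arbitrary and $\Phi$ is smooth in $\zz[1]$ on $\RR_{>0}^{\sum_i(d_i-1)}$, the function $\Phi$ is constant there.

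It remains to compute $\Phi$ at $\zz[1]=0$. Then every $P_\ell$ equals $1+\alpha^{d_{k_\ell}}$ — the $z$-mutations preserve the vanishing of all $z$-variables — and hence so does $P^\circ_\ell$ in both sign cases, so Proposition~\ref{prop:zero coefficients} gives $\LL_{P^\circ_\ell}\!\big((y_{k_\ell}[\ell])^{\varepsilon_\ell}\big)=\frac{1}{d_{k_\ell}}\LL\!\big((y_{k_\ell}[\ell])^{d_{k_\ell}\varepsilon_\ell}\big)=\frac{1}{d_{k_\ell}}\LL\!\big(({}^{\mathcal{R}}y_{k_\ell}[\ell])^{\varepsilon_\ell}\big)$, the last step identifying $(y_{k_\ell}[\ell])^{d_{k_\ell}}$ with the corresponding $y$-variable of the companion periodicity via Theorem~\ref{th:companion periodicity}. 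Thus $\Phi|_{\zz[1]=0}=\sum_{\ell=1}^M\frac{\varepsilon_\ell}{d_{k_\ell}r_{k_\ell}}\LL\!\big(({}^{\mathcal{R}}y_{k_\ell}[\ell])^{\varepsilon_\ell}\big)$, which vanishes by Theorem~\ref{th:companion dilog}. Finally, since the relevant $y$-variables extend continuously and positively to $\zz[1]=0$ (again because the $F$-polynomials have constant term $1$ in $\yy$) and $\LL_P(x)$ depends continuously on the non-negative coefficients of $P$, the function $\Phi$ extends continuously to $\RR_{\ge0}^{\sum_i(d_i-1)}$; being continuous there and constant on the dense connected interior, it equals $\Phi|_{\zz[1]=0}=0$ everywhere, which is \eqref{eq:dilog constancy w epsilon}.

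I expect the main obstacle to be the bookkeeping in the second paragraph: tracking how the single scalar $z_{j,s}[1]$ propagates, through the order-reversing $z$-mutations, to the coefficient of degree $s_\ell^\circ$ of $P_\ell$; how the $\varepsilon_\ell=-1$ reversal inside $P^\circ_\ell$ shifts that degree to $d_j-s_\ell^\circ$; and then verifying that the integrals produced by Proposition~\ref{prop:constancy condition} match, term by term, those appearing in \eqref{eq:analogues} — this identification is exactly where the full-rank hypothesis is consumed, via Theorem~\ref{th:analogues}. A secondary point requiring care is that Lemma~\ref{le:constancy}, Proposition~\ref{prop:constancy condition}, and Theorem~\ref{th:analogues} must all be read with the $z$-variables regarded as functions of $z$ (so that, by specialization from the universal semifield, the relevant wedge identity lives in $\bigwedge^2\mathcal{C}$), and that Theorem~\ref{th:analogues} is invoked only after fixing an arbitrary groupoid-seed extension of the periodicity.
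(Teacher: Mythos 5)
Your proposal is correct and follows essentially the same route as the paper: establish $z$-independence of the sum by feeding Lemma~\ref{le:constancy} into Proposition~\ref{prop:constancy condition} and killing the resulting integrals with Theorem~\ref{th:analogues} (this is where full rank enters), then evaluate at $\zz[1]=0$ via Proposition~\ref{prop:zero coefficients} and Theorem~\ref{th:companion dilog}. The only differences are that you spell out the coefficient bookkeeping for $P^\circ_\ell$ and add an explicit continuity argument at the boundary point $\zz[1]=0$, both of which the paper leaves implicit.
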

    \begin{proof}
      We first observe that, for \(1 \leq j \leq n\) and \(1 \leq s \leq d_i-1\), we have
      \[
        \frac{\delta}{\delta z_{j,s}} \sum\limits_{\ell=1}^M \frac{\varepsilon_\ell}{r_{k_\ell}} \LL_{P^\circ_\ell}\big((y_{k_\ell}[\ell])^{\varepsilon_\ell}\big) = 0.
      \]
      Indeed, following Lemma~\ref{le:constancy} and Proposition~\ref{prop:constancy condition}, we have
      \[
        \frac{\delta}{\delta z_{j,s}} \sum\limits_{\ell=1}^M \frac{\varepsilon_\ell}{r_{k_\ell}} \LL_{P^\circ_\ell}\big((y_{k_\ell}[\ell])^{\varepsilon_\ell}\big) = 
        \sum\limits_{\ell:k_\ell = j} \frac{\varepsilon_{\ell}}{r_j} \int_{0}^{y_{j}[\ell]^{\varepsilon_{\ell}}} \frac{u^{\varepsilon_{\ell} s_\ell^\circ-1}}{P_{\ell}(u^{\varepsilon_{\ell}})} \ \delta u
      \]
      which is zero by Theorem~\ref{th:analogues}.
  
      Thus the expression 
      \[\sum\limits_{\ell=1}^M \frac{\varepsilon_\ell}{r_{k_\ell}} \LL_{P^\circ_\ell}\big((y_{k_\ell}[\ell])^{\varepsilon_\ell}\big)\]
      does not depend on the \(z\)-variables.
      Thus taking the $z$-variables to be zero and applying Proposition~\ref{prop:zero coefficients}, we get
      \[\sum\limits_{\ell=1}^M \frac{\varepsilon_\ell}{r_{k_\ell}} \LL_{P^\circ_\ell}\big((y_{k_\ell}[\ell])^{\varepsilon_\ell}\big)
      =
      \sum\limits_{\ell=1}^M \frac{\varepsilon_\ell}{d_{k_\ell}r_{k_\ell}} \LL\!\left((y_{k_\ell}[\ell])^{\varepsilon_\ell d_{k_\ell}}\right)
      =
      \sum\limits_{\ell=1}^M \frac{\varepsilon_\ell}{d_{k_\ell}r_{k_\ell}} \LL\!\left(({}^{\mathcal{R}} y_{k_\ell}[\ell])^{\varepsilon_\ell}\right)\]
      which is zero by Theorem~\ref{th:companion dilog}.
    \end{proof}
  
    \begin{remark}
      If instead we take Theorem~\ref{th:dilog constancy w epsilon} without the full rank assumption as proved in \cite{nakanishi2018rogers} as the starting point, we may reverse the logic applied in the proof above to show that Theorem~\ref{th:analogues} holds regardless of the rank of the exchange matrix \(B\).
    \end{remark}

    We can leverage Theorem~\ref{th:analogues} together with the original constancy condition Theorem~\ref{th:dilog constancy no epsilon} to prove an additional result about dilogarithm identities from periodicities.
    \begin{theorem}
      Let 
      \[\big(B[1],\yy[1],\zz[1]\big) \stackrel{\mu_{k_1}}{\joinrel\relbar\!\longrightarrow} \cdots \stackrel{\mu_{k_M}}{\joinrel\relbar\!\longrightarrow} \big(B[M+1],\yy[M+1],\zz[M+1]\big)\]
      be a proper \(\sigma\)-periodicity of seeds with $B[1]$ of full rank.
      For any \(1 \leq j \leq n\) and \(1 \leq s \leq d_i-1\), we have
      \[\frac{\delta}{\delta z_{j,s}} \sum\limits_{\ell=1}^M \frac{1}{r_{k_\ell}} \LL_{P_\ell}\big(y_{k_\ell}[\ell]\big) = \sum\limits_{\ell:k_\ell=j} \left(\frac{1-\varepsilon_\ell}{2r_j}\right) \int_0^\infty \frac{u^{s_\ell^\circ-1}}{P_\ell(u)} \ \delta u.\]
    \end{theorem}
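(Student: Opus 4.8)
The plan is to imitate the proof of Theorem~\ref{th:dilog constancy w epsilon}, but to run the argument with the untwisted polynomials $P_\ell$ and the weights $1/r_{k_\ell}$ in place of $P_\ell^\circ$ and $\varepsilon_\ell/r_{k_\ell}$, and then to trade the finite integrals that come out for the improper integrals $\int_0^\infty$ appearing on the right by invoking Theorem~\ref{th:analogues} and the substitution $u\mapsto u^{-1}$. We work over $\RR_{>0}$, we may assume $d_j>1$ (otherwise there is no $z$-variable for the index $j$ and the statement is vacuous), and we fix all of the initial data of the periodicity except $z:=z_{j,s}[1]$, which we let vary over $\RR_{>0}$. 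By the separation of additions formula~\eqref{eq:separation of additions} each $y_{k_\ell}[\ell]$ is a subtraction-free rational expression in the initial data, hence a smooth positive function of $z$, that is, an element of $\mathcal{C}$; likewise $P_\ell\big(y_{k_\ell}[\ell]\big)\in\mathcal{C}$, as $P_\ell$ has non-negative coefficients and positive constant term. With the remaining $z$-variables held fixed, Theorem~\ref{th:cluster wedge} then gives $\sum_{\ell=1}^M\frac{1}{r_{k_\ell}}\big(y_{k_\ell}[\ell]\wedge P_\ell(y_{k_\ell}[\ell])\big)=0$ in $\bigwedge^2\mathcal{C}$.

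The next step is to locate $z$ among the coefficients of the $P_\ell$. Tracking $z_{j,s}[1]$ through the $z$-variable mutation rule, under which each mutation in direction $j$ sends $z_{j,s'}\mapsto z_{j,d_j-s'}$, one finds that for $\ell$ with $k_\ell=j$ the coefficient of $x^{s_\ell^\circ}$ in $P_\ell$ is exactly $z$, while every other coefficient of $P_\ell$ is either $1$ or another (now fixed) initial $z$-variable, hence a non-negative constant; and for $\ell$ with $k_\ell\neq j$ no coefficient of $P_\ell$ is $z$. Hence Proposition~\ref{prop:constancy condition} applies to the family $\{P_\ell\}$ and the functions $y_{k_\ell}[\ell]$ with $c_\ell=1/r_{k_\ell}$, taking $s_\ell=s_\ell^\circ$ and $p_\ell=z$ (so $\tfrac{\delta p_\ell}{\delta z}=1$) when $k_\ell=j$ and $p_\ell$ constant (so $\tfrac{\delta p_\ell}{\delta z}=0$) when $k_\ell\neq j$. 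It yields
\[
\frac{\delta}{\delta z_{j,s}}\sum_{\ell=1}^M\frac{1}{r_{k_\ell}}\LL_{P_\ell}\big(y_{k_\ell}[\ell]\big)=\sum_{\ell:k_\ell=j}\frac{1}{r_j}\int_0^{y_j[\ell]}\frac{u^{s_\ell^\circ-1}}{P_\ell(u)}\ \delta u.
\]

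It remains to rewrite the right-hand side using Theorem~\ref{th:analogues}. Since $1\le s_\ell^\circ\le d_j-1$ and $P_\ell$ has degree $d_j$ and positive constant term, the integral $\int_0^\infty u^{s_\ell^\circ-1}P_\ell(u)^{-1}\,\delta u$ converges. In~\eqref{eq:analogues}, the $\ell$-th summand equals $\frac{1}{r_j}\int_0^{y_j[\ell]}u^{s_\ell^\circ-1}P_\ell(u)^{-1}\,\delta u$ when $\varepsilon_\ell=1$; when $\varepsilon_\ell=-1$, the substitution $u\mapsto u^{-1}$ carries $\int_0^{y_j[\ell]^{-1}}u^{-s_\ell^\circ-1}P_\ell(u^{-1})^{-1}\,\delta u$ to $\int_{y_j[\ell]}^\infty u^{s_\ell^\circ-1}P_\ell(u)^{-1}\,\delta u$, so the $\ell$-th summand becomes $\frac{1}{r_j}\int_0^{y_j[\ell]}u^{s_\ell^\circ-1}P_\ell(u)^{-1}\,\delta u-\frac{1}{r_j}\int_0^\infty u^{s_\ell^\circ-1}P_\ell(u)^{-1}\,\delta u$. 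In either case the summand is $\frac{1}{r_j}\int_0^{y_j[\ell]}u^{s_\ell^\circ-1}P_\ell(u)^{-1}\,\delta u-\frac{1-\varepsilon_\ell}{2r_j}\int_0^\infty u^{s_\ell^\circ-1}P_\ell(u)^{-1}\,\delta u$, and since Theorem~\ref{th:analogues} asserts the sum vanishes, we obtain $\sum_{\ell:k_\ell=j}\frac{1}{r_j}\int_0^{y_j[\ell]}u^{s_\ell^\circ-1}P_\ell(u)^{-1}\,\delta u=\sum_{\ell:k_\ell=j}\frac{1-\varepsilon_\ell}{2r_j}\int_0^\infty u^{s_\ell^\circ-1}P_\ell(u)^{-1}\,\delta u$. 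Together with the displayed derivative formula, this is the assertion.

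I expect the main obstacle to be the bookkeeping in the second step: carefully verifying that for $k_\ell=j$ the coefficient of $x^{s_\ell^\circ}$ in $P_\ell$ is precisely the chosen variable $z$ --- this is exactly where the definition of $s_\ell^\circ$ and the $z$-variable mutation rule enter --- and confirming that $\{P_\ell\}$ genuinely meets the hypotheses of Proposition~\ref{prop:constancy condition} (monic, constant term $1$, exactly one coefficient a function of $z$, the rest non-negative constants). The analytic part, namely the convergence of the improper integral and the reflection $u\mapsto u^{-1}$, is routine.
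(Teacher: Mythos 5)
Your proposal is correct and takes essentially the same route as the paper: Proposition~\ref{prop:constancy condition} (with Theorem~\ref{th:cluster wedge} supplying the wedge hypothesis) yields $\frac{\delta}{\delta z_{j,s}}\sum_{\ell}\frac{1}{r_{k_\ell}}\LL_{P_\ell}(y_{k_\ell}[\ell])=\sum_{\ell:k_\ell=j}\frac{1}{r_j}\int_0^{y_{k_\ell}[\ell]}u^{s_\ell^\circ-1}P_\ell(u)^{-1}\,\delta u$, and the change of variables $v=u^{\varepsilon_\ell}$ applied to \eqref{eq:analogues} converts that sum of finite integrals into the stated combination of improper integrals. The only difference is that you make explicit the bookkeeping the paper leaves implicit, namely identifying $z_{j,s}[1]$ as the coefficient of $x^{s_\ell^\circ}$ in $P_\ell$ and checking convergence of $\int_0^\infty u^{s_\ell^\circ-1}P_\ell(u)^{-1}\,\delta u$; both checks are correct.
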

    \begin{proof}
      From Proposition~\ref{prop:constancy condition}, we get
      \[\frac{\delta}{\delta z_{j,s}} \sum\limits_{\ell=1}^M \frac{1}{r_{k_\ell}} \LL_{P_\ell}\big(y_{k_\ell}[\ell]\big) = \sum\limits_{\ell:k_\ell=j} \frac{1}{r_j} \int_{0}^{y_{k_\ell}[\ell]}\frac{u^{s_\ell^\circ-1}}{P_\ell(u)} \ \delta u.\]
      From here we apply the change of variables \(v = u^{\varepsilon_\ell}\) in each term of \eqref{eq:analogues} to get
      \[
        \sum\limits_{\ell:k_\ell = j, \varepsilon_\ell=1} \frac{1}{r_j}\int_0^{y_{k_\ell}[\ell]} \frac{v^{s_\ell^\circ-1}}{P_\ell(v)} \ \delta v + \sum\limits_{\ell:k_\ell = j, \varepsilon_\ell=-1} \frac{1}{r_j}\int_{\infty}^{y_{k_\ell}[\ell]} \frac{v^{s_\ell^\circ-1}}{P_\ell(v)} \ \delta v
        = 0.
      \]
      This is equivalent to 
      \[
        \sum\limits_{\ell:k_\ell=j} \frac{1}{r_j} \int_{0}^{y_{k_\ell}[\ell]}\frac{v^{s_\ell^\circ-1}}{P_\ell(v)} \ \delta v = \sum\limits_{\ell:k_\ell=j} \left(\frac{1-\varepsilon_\ell}{2r_j}\right) \int_0^\infty \frac{v^{s_\ell^\circ-1}}{P_\ell(v)} \ \delta v
      \]
      which concludes the proof.
    \end{proof}
    
    This can be refined using Theorem~\ref{th:dilog constancy w epsilon} as follows.
    \begin{theorem}
      \label{th:dilog constancy no epsilon}
      \cite[Theorem 4.5]{nakanishi2018rogers}
      Let 
      \[\big(B[1],\yy[1],\zz[1]\big) \stackrel{\mu_{k_1}}{\joinrel\relbar\!\longrightarrow} \cdots \stackrel{\mu_{k_M}}{\joinrel\relbar\!\longrightarrow} \big(B[M+1],\yy[M+1],\zz[M+1]\big)\]
      be a proper \(\sigma\)-periodicity of seeds with $B[1]$ of full rank.
      Then 
      \[
        \sum\limits_{\ell=1}^M \frac{1}{r_{k_\ell}} \LL_{P_\ell}\big(y_{k_\ell}[\ell]\big) = \sum\limits_{\ell=1}^M \frac{1}{r_{k_\ell}} \left(\frac{1-\varepsilon_\ell}{2}\right) \LL_{P_\ell}(\infty),
      \]
      in particular it does not depend on the initial \(y\)-variables.
    \end{theorem}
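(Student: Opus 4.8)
The plan is to deduce this statement from Theorem~\ref{th:dilog constancy w epsilon} by matching its summands against the ones appearing here, term by term. I would split on the tropical sign $\varepsilon_\ell$. When $\varepsilon_\ell=1$ one has $P^\circ_\ell=P_\ell$, so the $\ell$-th summand of the left-hand side of \eqref{eq:dilog constancy w epsilon} is literally $\frac{1}{r_{k_\ell}}\LL_{P_\ell}\big(y_{k_\ell}[\ell]\big)$. When $\varepsilon_\ell=-1$, the definition $P^\circ_\ell(\alpha)=P_\ell(\alpha^{-1})\alpha^{d_{k_\ell}}$ together with the fact that $P_\ell$ is monic of degree $d_{k_\ell}$ with $P_\ell(0)=1$ shows that $P^\circ_\ell$ is exactly the reversed polynomial $P_\ell^*$, so the $\ell$-th summand becomes $-\frac{1}{r_{k_\ell}}\LL_{P_\ell^*}\big(y_{k_\ell}[\ell]^{-1}\big)$.

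Next I would feed in the elementary identity \eqref{eq:constancy example}, applied with $P=P_\ell$ and $x=y_{k_\ell}[\ell]$, in the form $\LL_{P_\ell^*}\big(y_{k_\ell}[\ell]^{-1}\big)=\LL_{P_\ell}(\infty)-\LL_{P_\ell}\big(y_{k_\ell}[\ell]\big)$. Since $\frac{1-\varepsilon_\ell}{2}$ is $0$ when $\varepsilon_\ell=1$ and $1$ when $\varepsilon_\ell=-1$, the two cases combine into the single identity
\[
\frac{\varepsilon_\ell}{r_{k_\ell}}\LL_{P^\circ_\ell}\big(y_{k_\ell}[\ell]^{\varepsilon_\ell}\big)=\frac{1}{r_{k_\ell}}\LL_{P_\ell}\big(y_{k_\ell}[\ell]\big)-\frac{1}{r_{k_\ell}}\left(\frac{1-\varepsilon_\ell}{2}\right)\LL_{P_\ell}(\infty),
\]
valid for every $1\le\ell\le M$. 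Summing over $\ell$ and using that the left-hand side vanishes by Theorem~\ref{th:dilog constancy w epsilon} (whose full-rank hypothesis is exactly the one assumed here) yields the claimed equation after transposing the $\LL_{P_\ell}(\infty)$-terms to the other side.

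For the final assertion that the expression does not depend on the initial $y$-variables, I would observe that the right-hand side involves only the tropical signs $\varepsilon_\ell$, which are governed by the $C$-matrices and hence determined by the combinatorics of the mutation sequence and the initial exchange matrix alone, together with the constants $\LL_{P_\ell}(\infty)$, which depend only on the polynomials $P_\ell$, i.e.\ on the $z$-variables along the periodicity; none of this data involves the initial $y$-variables. I do not anticipate a genuine obstacle: the argument is a bookkeeping reduction to Theorem~\ref{th:dilog constancy w epsilon}, and the only point demanding a little care is the degree/reversal computation identifying $P^\circ_\ell$ with $P_\ell^*$ in the case $\varepsilon_\ell=-1$, together with the correct invocation of \eqref{eq:constancy example} and the bookkeeping of the sign factor $\tfrac{1-\varepsilon_\ell}{2}$.
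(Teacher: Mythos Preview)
Your proposal is correct and follows essentially the same route as the paper: both arguments split on the sign $\varepsilon_\ell$, identify $P^\circ_\ell$ with $P_\ell$ or $P_\ell^*$ accordingly, invoke the identity \eqref{eq:constancy example} in the case $\varepsilon_\ell=-1$, and then sum and appeal to Theorem~\ref{th:dilog constancy w epsilon}. Your write-up is in fact slightly more explicit than the paper's about the identification $P^\circ_\ell=P_\ell^*$ and about why the right-hand side is independent of the initial $y$-variables, but the underlying argument is the same.
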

    \begin{proof}
      We may rewrite \ref{eq:constancy example} as
      \[
        \LL_{P^\circ}(x^\varepsilon) = 
        \begin{cases}
        \LL_{P}(x) & \text{if $\varepsilon=1$;} \\
        \LL_{P}(\infty)-\LL_{P}(x) & \text{if $\varepsilon=-1$.}
        \end{cases}
      \]
      Substituting this into equation \eqref{eq:dilog constancy w epsilon} from Theorem~\ref{th:dilog constancy w epsilon} yields
      \[
        \sum\limits_{\ell = 1}^M \frac{1}{r_{k_\ell}} \LL_{P_\ell}\big(y_{k_\ell}[\ell]\big) - \sum\limits_{\ell ; \varepsilon_\ell=-1} \frac{1}{r_{k_\ell}} \LL_{P_\ell}(\infty) = 0
      \]
      which is equivalent to the claim.
    \end{proof}

  \bibliographystyle{amsalpha}
  \bibliography{bibliography.bib}
\end{document}